\documentclass{article}
\title{Comparing and simplifying distinct-cluster phylogenetic networks}

\author{Stephen J. Willson\\
		Department of Mathematics\\
		Iowa State University\\
		Ames, IA 50011 USA\\
		swillson@iastate.edu}

\usepackage{amsmath}
\usepackage{amsthm}
\usepackage{amssymb}

\usepackage{graphicx}



\usepackage{verbatim} 
\newtheorem{lem}{Lemma}[section]
\newtheorem{thm}[lem]{Theorem}
\newtheorem{cor}[lem]{Corollary}



\begin{document}

\maketitle

\textbf{Abstract:}
Phylogenetic networks are rooted acyclic directed graphs in which the leaves are identified with members of a set $X$ of species.  The cluster of a vertex is the set of leaves that are descendants of the vertex.  A network is ``distinct-cluster'' if  distinct vertices have distinct clusters. This paper focuses on the set $DC(X) $ of distinct-cluster networks whose leaves are identified with the members of  $X$.  For a fixed $X$, a metric on $DC(X)$ is defined.  There is a ``cluster-preserving" simplification process by which vertices or certain arcs may be removed without changing the clusters of any remaining vertices.  Many of the resulting networks may be uniquely determined without regard to the order of the simplifying operations. 

\textbf{Short running title:} Comparing distinct-cluster phylogenetic networks

\textbf{AMS Subject classification (2010):} Primary 92D15; Secondary 05C20, 05C38

\textbf{Keywords:}  phylogeny, network, metric, phylogenetic network, cluster

\section {Introduction} 

It is common in biology to describe evolutionary history by means of a phylogenetic tree $T$.  (See the book \cite{sem03} for many details.) In such a tree, the leaf set corresponds to a set $X$ of taxa on which measurements such as on DNA can be made.  Internal vertices correspond to ancestral species.  Branching corresponds to speciation events by some isolation mechanism.  Typically the trees are assumed to be rooted in the distant past, perhaps by means of an outgroup.  

Recently, the roles of hybridization and lateral gene transfer have been seen to be important; see  \cite{rhy96},  \cite{doo07}, \cite{bou03}, \cite{min13}.  Models of such events sometimes lead to rooted acyclic networks which, unlike trees, allow branches to recombine.  Overviews  for such networks are found in \cite{mor04}, \cite{nak05}, \cite{mor11}, \cite {hus10}.

The number of possible rooted acyclic networks with a given leaf set $X$ is infinite.  Some researchers have focused attention on networks with additional properties.   Such classes include networks that are regular \cite{bar04}, normal \cite{wil10}, tree-child \cite{crv09},  galled trees \cite{gus04}, or level-$k$ \cite{van09}.  

Formal definitions are given in Section 2.  This introduction will give a rough idea of some of the critical concepts.

If $N$ is a finite acyclic rooted network with leaf set $X$, for each vertex $v$ the \emph{cluster} $cl(v;N)$ of $v$  (or $cl(v)$ if $N$ is clear from the context) is the set of $x \in X$ which are descendants of $v$ along any directed path in $N$.  Here we utilize the ``hardwired'' clusters in the sense of \cite{hus10}.  The clusters are important for interpreting biological networks.  For example, in a biological network $cl(v)$ is the set of extant species whose genome possibly contains mutations originating in the ancestral species $v$.
An example of a network $N$ is shown in Figure 1 with $X = \{1,2,3,4\}$.  In $N$, $cl(8) = \{1,2,3\}$ and $cl(3) = \{3\}$.    

\begin{figure}[!htb]  
\begin{center}

\begin{picture}(200,145) (0,0)
\put(40,40){\circle{4}}
\put(40,40){\vector(-1,-1){28}}
\put(40,40){\vector(1,-1){28}}
\put(70,70){\circle{4}}
\put(70,70){\vector(-1,-1){28}}
\put(70,70){\vector(1,-1){28}}
\put(100,40){\circle{4}}
\put(100,40){\vector(-1,-1){28}}
\put(100,40){\vector(1,-1){28}}
\put(130,130){\circle{4}}
\put(130,130){\vector(-1,-1){58}}
\put(130,130){\vector(0,-1){118}}
\put(130,130){\vector(1,-3){29}}
\put(160,40){\circle{4}}
\put(160,40){\vector(1,-1){28}}
\put(160,40){\vector(-1,-1){28}}
\put(10,10){\circle{4}}
\put(70,10){\circle{4}}
\put(130,10){\circle{4}}
\put(190,10){\circle{4}}

\put(0,10) {1}
\put(55,10) {2}
\put(115,10){3}
\put(175,10){4}
\put(25,40){5}
\put(85,40){6}
\put(170,40){7}
\put(60,70){8}
\put(140,130){9}

\end{picture}

\caption{ A distinct-cluster phylogenetic network $N$. 
 }  
\end{center}
\end{figure}
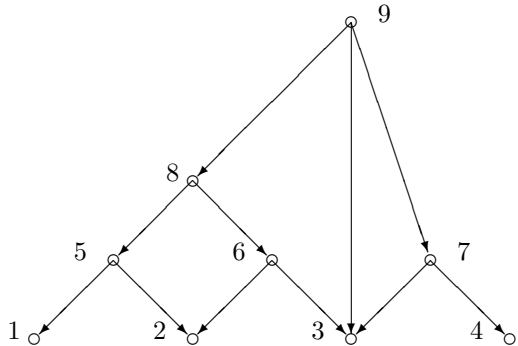

A network $N$ with vertex set $V$ is \emph{distinct-cluster} (DC) provided that whenever $v$ and $w$ are distinct vertices, then $cl(v) \neq cl(w)$.   Thus distinct vertices have different clusters.  Let $DC(X)$ denote the set of distinct-cluster acyclic rooted networks with leaf set $X$ up to isomorphism.  The network in Figure 1 lies in $DC(X)$.   

This paper studies $DC(X)$, which includes all trees, regular networks, and normal networks with leaf set $X$. 

It is useful to have a quantitative measure of the difference between two networks with the same leaf set $X$.  In comparing two trees, there are several metrics, including the Robinson-Foulds metric \cite{rob81}, the nearest neighbor interchange metric \cite{rob71}, \cite{wat78},  the triples distance \cite{cri96}, the SPR distance \cite{all01}, and the matching distance \cite{lin12}.  

Generalizing to networks that are not necessarily trees,  \cite{bar04} gives a metric between two regular networks.  The path-multiplicity distance or $\mu$-distance is  a metric between two tree-child networks  \cite{crv09}  and also between two networks that are tree-sibling and also time-consistent \cite{clr08}.  The tripartition distance \cite{car09} is a metric on rooted phylogenetic networks that are tree-child and also time-consistent.  The nested-labels distance \cite{clr09} is a metric on tree-child networks.

This paper defines a metric on $DC(X)$.  If $N_1$ and $N_2$ are in $DC(X)$, the \emph{inheritance metric} $D(N_1,N_2)$ between them is defined in Section 4.  The main tool is a matrix $H$ (or $H(N)$ to specify the network)  for $N\in DC(X)$  such that for each pair of vertices $u$ and $v$, $H_{u,v}(N)$ is the number of distinct directed paths in $N$ from $u$ to $v$.  Since $N$ is DC, we may identify $u$ and $v$ with their clusters.   Now suppose $N_1$ and $N_2$ are in $DC(X)$.  Suppose that  vertices $u_1$ and $v_1$ of $N_1$ and vertices $u_2$ and $v_2$ of $N_2$ satisfy that $cl(u_1;N_1) = cl(u_2;N_2)$ and $cl(v_1;N_1) = cl(v_2;N_2)$.  Then $u_1$ and $u_2$ may be identified because they have the same cluster and both networks are DC.   Similarly $v_1$ and $v_2$ may be identified.  A direct comparison is now possible between $H_{u_1,v_1}(N_1)$ and $H_{u_2,v_2}(N_2)$ since both count the number of directed paths between $u_1=u_2$ and $v_1=v_2$ in their respective networks.  The term $|H_{u_1,v_1}(N_1)-H_{u_2,v_2}(N_2)|$ contributes to $D(N_1,N_2)$.  

It is interesting that the matrix $H$ includes the information utilized in \cite{crv09} to produce a metric on tree-child networks.  More precisely, if $N$ is both DC and tree-child, then the vectors $\mu(v) $ utilized in \cite{crv09} have the entries $H_{v,x}$ for $x \in X$.  

Even DC networks can be very large.  The number of vertices in a member of $DC(X) $ can 
grow exponentially with $|X|$, bounded by the number of distinct nonempty subsets of $X$.  
Huge networks can be difficult to interpret, and it may be useful to ``simplify" a network into one with fewer vertices and arcs that is potentially easier to understand and which summarizes certain aspects of the original network.  For example, one might ask for a natural procedure to simplify a huge network into a tree or into a tree-child network.  Alternatively, one might ask which trees or normal networks $N_2$ ``best fit'' a given network $N_1$ in the sense of minimizing $D(N_1,N_2)$.  

This paper describes one simplification process:   If $N$ is in $DC(X)$, a \emph{cluster-preserving simplification} (CPS) of $N$ is a member of $DC(X) $ obtained recursively by removing a vertex or an arc from $N$ while taking care that each remaining vertex has its cluster unchanged (preserved).  In Section 5 we give definitions of the basic operations (defined so as not to change the clusters).  

In Section 6 we prove that to a large extent the order of the operations is immaterial. An arc $(u,v)$ is \emph{redundant} if there is a directed path from $u$ to $v$ other than the path along the arc $(u,v)$ itself.  For example, in Figure 1, $ (9,3) $ is redundant.    A key result  is Theorem 7.3, which asserts that when $V'$ is a subset of $V(N) $ containing the root $r$ and each member of $X$, then there exists a unique $N' \in DC(X) $ such that 
$V(N')  = V'$,
$N'$ has no redundant arcs, and 
 $N'$ is a CPS of $N$.

In particular, Theorem 7.3 says that the order of the operations in forming the CPS does not matter when the CPS has no redundant arcs.  Thus each member of a large family of CPS of $N$ depends only on its vertex set.  Moreover, Theorem 7.4 asserts that for two such subsets $V'$ and $V''$, if the unique networks are $N'$ and $N''$ respectively, then $N''$ is a CPS of $N'$ iff $V'' \subseteq V'$.  
An example is presented in Section 8, while some extensions  are described in Section 9. 

\section { Basic notions } 

A \emph{directed graph} $(V,E)$ is a set $V$ of \emph{vertices} and a set $E \subset V \times V$  where each $(a,b) \in E$ is called an \emph{arc}.  We interpret $ (a,b) \in E$ as a line segment directed from $a$ to $b$.  We assume there is no arc $(a,a)$, so that there are no loops.   Since $E$ is a set, not a multiset, there are no multiple arcs.  

If $a$ and $b$ are vertices, a \emph{directed path} from $a$ to $b$ is a sequence $a=v_0, \cdots , v_k=b$ in $V$ such that for $i = 0, \cdots , k-1, $ there is an arc $(v_i, v_{i+1}) \in E$.  The \emph{length} of the path is $k$.  There is always a directed path of length 0 from $a$ to $a$.  An arc $ (a,b) $ is \emph{redundant} if there exists a directed path $a = v_0, v_1, \cdots , v_k = b$ with $k > 1$ (so that the path is not the same as the path consisting of the single arc $ (a,b) $).  

If $q$ and $c$ are vertices then $c$ is a \emph{child} of $q$ and $q$ is a \emph{parent} of c iff there exists an arc $(q,c)\in E$.  A child $c$ of $q$ is a \emph{tree-child} iff $q$ is the only parent of $c$.  A vertex is \emph{hybrid} if it has more than one parent.  The \emph{out-degree} of a vertex $v$ is the number of children of $v$, while the \emph{in-degree} of a vertex $v$ is the number of parents of $v$.   A vertex is a \emph{leaf} if it has no child. 

Let $X$ be a set (for example, of biological species).  If $L$ is the set of leaves, we shall assume there is a bijection $\psi: X \to L$.  Usually we will identify $x \in X$ with $\psi(x) \in L$, so if $x\in X$ we may write $x\in L$.  Let $\mathcal{P}(X)$ denote the set of all nonempty subsets of $X$.   

A directed graph $(V,E)$ is \emph{rooted} with \emph{root} $r$ if there exists a vertex $r$ such that for every $v \in V$ there is a directed path from $r$ to $v$.  In Figure 1, $N$ is rooted with root $r = 9$.

A directed graph is \emph{acyclic} if there is no directed cycle; i.e., there is no vertex $v$ with a directed path from $v$ to $v$ of length greater than 0.  If the network is both rooted and acyclic, then the root is necessarily unique.  

If $N=(V,E)$ is acyclic, write $u \leq v$ iff there is a directed path in $N$ from $u$ to $v$.  Since $N$ is acyclic it follows that if $u \leq v$ and $v \leq u$ then $u = v$.  Transitivity is obvious, so $\leq$ is a partial order on $V$.  If $u \leq v$ and $u \neq v$, write $u < v$.  

An \emph{$X$-cluster} is a nonempty subset of $X$.  If $v$ is a vertex, then the \emph{cluster} $cl(v)$ of $v$ (or $cl(v; N) $ if we need to specify the network $N$) is $cl(v) = \{x \in X: v \leq x\}$.  It is thus an $X$-cluster containing the leaves $x \in X$ such that there exists a directed path from $v$ to $x$ (of any length).  Note that $cl(v)$ is the ``hardwired'' cluster of $v$ in the sense of \cite{hus10}.  Since there are no directed cycles, a directed path of maximum length starting at $v$ must end at a leaf, so that $cl(v)$ is nonempty.   If $x \in X$, then $cl(x) = \{x\}$ via the directed path from $x$ to $x$ of length 0.  In Figure 1, $cl(6) = \{2,3\}$ and $cl(9) = \{1,2,3,4\}$.  

A cluster is \emph{trivial} if it has the form $X$ or $\{x\}$ where $x \in X$.  Let $Tr(X)$ denote the set of trivial clusters for the set $X$.  In Figure 1, $Tr(X) = \{\{1\}, \{2\}, \{3\},$ $ \{4\}, \{1,2,3,4\}\}$.   

It is easy to see that if $u \leq v$ then $cl(v) \subseteq cl(u) $. 

A network is \emph{distinct-cluster} (DC) if no two vertices have the same cluster; i.e., whenever $u\neq v$ then $cl(u) \neq cl(v)$.  
If the network is DC, then each vertex that is not a leaf has out-degree at least 2.  To see this, note that  if $v$ has child $c_1$ then $cl(c_1 ) \subset cl(v) $, so there exists $x \in cl(v) - cl(c_1)$ and there must exist a child $c_2$ of $v$ such that $x \in cl(c_2)$.  The network in Figure 1 is distinct-cluster.

We summarize the properties we shall require in the definition of an $X$-network:  
An \emph{X-network} $N$ is $N = (V,E,r,X) $
where $ (V,E) $ is a finite acyclic directed network  with root $r$ and leaf set $X$.  We may write $V(N)$ for $V$ or $E(N)$ for $E$.  Two $X$-networks $N_1$ and $N_2$ are \emph{isomorphic} if there is a bijection $\phi: V(N_1) \to V(N_2)$ such that $(\phi(u),\phi(v)) \in E(N_2)$ iff $(u,v)\in E(N_1)$ and such that the labels of the leaves are preserved.  A \emph{DC X-network} is an $X$-network that is DC.  

There are several special kinds of networks that are of interest.  Let $N = (V,E,r,X) $ be an $X$-network.   Then $N$ is a \emph{tree} iff no vertex is hybrid.  $N$ is \emph{tree-child} \cite{crv09} iff every vertex $v$ that is not a leaf has a child $c$ that is tree-child. $N$ is \emph{normal}  \cite{wil10} iff $N$ is tree-child, no vertex has outdegree 1, and in addition $N$ has no redundant arc. $N$ is \emph{regular} \cite{bar04}  iff  (1) it is DC and (2) there exists an arc $ (u,v) $ iff both 
$cl(v) \subset cl(u) $ and
there is no vertex $w$ such that $cl(v) \subset cl(w) \subset cl(u)$.

Every tree, every regular network, and every normal network is DC.   There exist tree-child networks which are not in $DC(X)$ (for example, Figure 2 of \cite{crv09}).   
Figure 1 is DC.  It is not tree-child and not normal since 6 has only hybrid children.  It is not regular since it contains the redundant arc $(9,3)$.

\section { The inheritance matrix} 

Let $N = (V,E,r, X) $ be an $X$-network (not necessarily DC).  Let the vertices be $v_1, \cdots , v_m$ in some order.
The \emph{adjacency matrix} $A$ of $N$ is the $m \times m$ matrix \\
	\begin{gather*}
	A_{i,j} = 
\begin{cases}
	1  & \text{  if there is an arc  } (v_i,v_j)\text{  in }E\\
	0 & \text{ otherwise}
\end{cases}
\end{gather*}

Note that $A$ encodes all the structure of $N$.     Given $A$, the leaves are those $v_i$ such that for all $j$, $A_{i,j} = 0$.  There is an arc $ (v_i, v_j) $ iff $A_{i,j} > 0$.  

It is well known (see, for example, \cite{har69})  that for all $k>0$,  $ (A^k)_{i,j}$ is the number of directed paths of length $k$ from $v_i$ to $v_j$.   

In this section we define a matrix $H$, called the \emph{inheritance matrix},  which will be of use in defining a metric on the set of DC $X$-networks.  An example is given in Section 8.

Since $N$ is finite and acyclic, there is a directed path of maximum length. If the maximum length is $L$, there are no paths of length $L+1$.

\begin{lem} 
Let the maximum length of a directed path in $N$ be $L$.  Then $A^k = 0$ for $k \geq L+1$.
 \end{lem}

\begin{proof}
If $k \geq L+1$, then $N$ has no directed path of length $k$. 
Hence $A^k = 0$. 
\end{proof}

Let $L$ be the maximum length of a directed path.  Define\\
$$H = I + A + A^2 + \cdots  + A^L. $$
By Lemma 3.1, it is equivalent to write
$$H = I + \sum_{j=1}^{\infty} A^j. $$

Call $H$ the \emph{inheritance matrix} (from the ``h'' in ``inheritance'').  If we need to specify the network $N$, we may also write $H(N)$ instead of $H$.

\begin{thm} 
For all vertices $v$, $H_{v,v} = 1$.
\end{thm}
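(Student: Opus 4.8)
The plan is to compute the diagonal entry $H_{v,v}$ directly from the definition $H = I + A + A^2 + \cdots + A^L$. Reading off the $(v,v)$ entry term by term gives
$$H_{v,v} = I_{v,v} + \sum_{k=1}^{L} (A^k)_{v,v}.$$
The identity matrix contributes $I_{v,v} = 1$, so everything reduces to showing that each remaining summand vanishes.

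For that I would invoke the combinatorial interpretation of matrix powers recalled just before the definition of $H$: for $k > 0$, the entry $(A^k)_{v,v}$ counts the directed paths of length $k$ from $v$ to $v$. First I would observe that such a path, having length $k \geq 1$, would be a directed path from $v$ to $v$ of length greater than $0$, i.e.\ a directed cycle through $v$. Since $N$ is acyclic by hypothesis, no such path exists, so $(A^k)_{v,v} = 0$ for every $k \geq 1$.

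Combining these two observations yields $H_{v,v} = 1 + \sum_{k=1}^{L} 0 = 1$, as claimed. I do not expect a genuine obstacle here: the only point requiring care is correctly matching the exponent $k \geq 1$ of each power $A^k$ to a path of positive length, which is precisely the situation excluded by acyclicity, and noting that the lone diagonal contribution comes from the $I$ term. Everything else is routine bookkeeping.
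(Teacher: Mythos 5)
Your proof is correct and follows the same route as the paper's: acyclicity rules out any directed path of positive length from $v$ to $v$, so $(A^k)_{v,v}=0$ for all $k\geq 1$ and only the $I_{v,v}=1$ term survives. The paper states this more tersely, but the argument is identical.
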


\begin{proof}
Let $v$ be a vertex.  Since $N$ is acyclic, no $k > 0$ satisfies $A^k_{v,v} > 0$.  It follows that $H_{v,v} = I_{v,v} = 1$.
\end{proof}

\begin{thm} 
For all vertices $u$ and $v$, $H_{u,v}$ is the number of directed paths in $N$ from $u$ to $v$ of any nonnegative length.
\end{thm}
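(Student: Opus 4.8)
The plan is to count the directed paths from $u$ to $v$ by partitioning them according to their length, and then to identify each length-class of paths with a corresponding term in the sum defining $H$. Since a directed path from $a$ to $b$ is defined above as a finite sequence of vertices, two paths of different lengths are automatically distinct as sequences; hence the total number of directed paths from $u$ to $v$ equals the sum, over all lengths $k \geq 0$, of the number of directed paths from $u$ to $v$ of length exactly $k$. It therefore suffices to match each term $I_{u,v}$ and $(A^k)_{u,v}$ with the count of paths of the appropriate length.

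First I would treat the length-$0$ paths separately. By the conventions fixed in Section 2, there is exactly one directed path of length $0$ from a vertex to itself, and there is no length-$0$ path between distinct vertices. Thus the number of length-$0$ paths from $u$ to $v$ is $1$ if $u=v$ and $0$ otherwise, which is precisely $I_{u,v}$. Next, for each $k \geq 1$ I would invoke the standard fact (cited from \cite{har69} earlier) that $(A^k)_{u,v}$ equals the number of directed paths of length $k$ from $u$ to $v$.

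Summing the length-$0$ contribution $I_{u,v}$ together with the length-$k$ contributions $(A^k)_{u,v}$ for $k = 1, \ldots, L$ yields
$$ I_{u,v} + \sum_{k=1}^{L} (A^k)_{u,v} = H_{u,v} $$
by the definition of $H$. Finally, because $L$ is the maximum length of a directed path in $N$, there are no directed paths of length exceeding $L$; equivalently, by Lemma 3.1, $A^k = 0$ for $k \geq L+1$, so truncating the sum at $k = L$ discards no paths. Hence $H_{u,v}$ counts every directed path from $u$ to $v$ exactly once.

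I expect no genuine obstacle here, as the result is essentially bookkeeping built on the cited formula for $(A^k)_{u,v}$. The only points requiring care are (i) accounting for the length-$0$ paths correctly, since these are invisible to every power $A^k$ with $k \geq 1$ and are supplied solely by the identity matrix $I$, and (ii) confirming that no path is double-counted across distinct values of $k$, which is immediate because paths of different lengths are different sequences.
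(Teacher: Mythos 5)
Your proof is correct and follows essentially the same route as the paper's: decompose paths by length, use the cited fact that $(A^k)_{u,v}$ counts length-$k$ paths, and account for the length-$0$ path via the identity matrix $I$. Your version merely spells out the bookkeeping (no double-counting, truncation at $L$ via Lemma 3.1) that the paper leaves implicit.
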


\begin{proof}
By the definition, $H_{u,v} = I _{u,v}+ \sum_{j=1}^{\infty} (A^j)_{u,v}. $ 
For each $j>0$, $(A^j)_{u,v}$ is the number of directed paths of length $j$ from $u$ to $v$. The trivial path $u$ of length 0 from $u$ to $u$ corresponds to the term  $I_{u,u}$. 
\end{proof}

\begin{thm} 
$H$ is invertible, and $I - A = H^{-1}$.  
\end{thm}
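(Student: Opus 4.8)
The plan is to exploit the telescoping structure of the finite geometric series defining $H$, using the nilpotency established in Lemma 3.1. Concretely, I would form the product $(I-A)H$ and expand it. Since $H = I + A + A^2 + \cdots + A^L$, distributing gives $(I-A)H = (I + A + \cdots + A^L) - (A + A^2 + \cdots + A^{L+1})$, and all intermediate terms cancel, leaving $(I-A)H = I - A^{L+1}$. By Lemma 3.1, $A^{L+1} = 0$ because $L+1 \geq L+1$, so $(I-A)H = I$.

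Next I would observe that the same cancellation works on the other side, since every term in sight is a polynomial in the single matrix $A$ and powers of $A$ commute with one another. Thus $H(I-A) = I - A^{L+1} = I$ as well. Having a two-sided inverse is the cleanest way to conclude invertibility without appealing to determinants or dimension counts.

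From $(I-A)H = I$ and $H(I-A) = I$, the standard uniqueness-of-inverse argument gives that $H$ is invertible with $H^{-1} = I - A$, which is exactly the claim.

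I do not expect any genuine obstacle here; the proof is a routine telescoping computation. The only points requiring care are bookkeeping ones: confirming that the index of the vanishing power is $A^{L+1}$ (so that the upper term of the subtracted series lines up to be killed by Lemma 3.1), and noting explicitly the commutativity of polynomials in $A$ so that one product computation suffices for both sides. If one wished to avoid even checking the second side, an alternative is to note that $(I-A)H = I$ for square matrices already forces $H(I-A) = I$, but presenting the symmetric telescoping is more transparent.
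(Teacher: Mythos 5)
Your proof is correct and takes essentially the same approach as the paper: the telescoping computation $(I-A)H = I - A^{L+1} = I$, with $A^{L+1}=0$ supplied by Lemma 3.1. The paper stops after this one-sided computation, so your explicit verification of $H(I-A)=I$ via commutativity of powers of $A$ is just a small extra dose of care, not a different argument.
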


\begin{proof}
 Let $L$ be the maximum length of a directed path.   Then $A^{L+1} = 0$ and 
$ (I-A) H$
$= (I-A)(I+A+A^2 + A^3 + \cdots  + A^L) $
$= I - A^{L+1}$ [by telescoping] $= I$.
\end{proof}

\begin{cor} 
$H = (I-A)^{-1}$  and  $A =  I - H^{-1}$.
\end{cor}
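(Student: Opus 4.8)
The plan is to derive both identities as immediate algebraic consequences of Theorem 3.4, which already establishes that $H$ is invertible and that $I - A = H^{-1}$. No further structural facts about the network $N$ are needed; the corollary is purely a rearrangement of the relation proved there, carried out in the ring of $m \times m$ matrices.

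First I would obtain $H = (I-A)^{-1}$. Theorem 3.4 gives $I - A = H^{-1}$, which says precisely that the matrix $I - A$ coincides with the inverse of $H$. Since $H$ is invertible, $H^{-1}$ is itself invertible with inverse $H$; equivalently, inverting both sides of $I - A = H^{-1}$ yields $(I-A)^{-1} = (H^{-1})^{-1} = H$. This is the first assertion. Next, for $A = I - H^{-1}$, I would again start from $I - A = H^{-1}$ and simply solve for $A$, adding $A$ and subtracting $H^{-1}$ from both sides to get $A = I - H^{-1}$. Each identity thus follows in a single line.

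I do not expect a genuine obstacle here, since the work is entirely formal. The only point requiring care is that the step from $I - A = H^{-1}$ to $(I-A)^{-1} = H$ must explicitly invoke the invertibility of $H$ asserted in Theorem 3.4; without that hypothesis, ``inverting both sides'' would not be justified and the conclusion $H = (I-A)^{-1}$ would not be licensed. Once that is noted, both equalities are routine.
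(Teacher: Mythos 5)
Your proof is correct and matches the paper's intent: the paper states this corollary without proof precisely because it is the immediate algebraic rearrangement of Theorem 3.4 ($H$ invertible, $I-A=H^{-1}$) that you carry out. Your added care in noting that inverting both sides requires the invertibility established in Theorem 3.4 is exactly the right justification.
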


\begin{cor} 
$N$ can be reconstructed given either $A$ or $H$.
\end{cor}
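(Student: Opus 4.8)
The plan is to reduce everything to the observation, already recorded immediately after the definition of the adjacency matrix, that $A$ determines all the structure of $N$. I would treat the two cases, reconstruction from $A$ and reconstruction from $H$, in turn, and show that the second reduces to the first by way of Corollary 3.5.

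First I would reconstruct $N$ from $A$. Fixing the indexing $v_1, \dots, v_m$ of the vertices, the vertex set $V = \{v_1, \dots, v_m\}$ is immediate, and the arc set is recovered as $E = \{(v_i, v_j) : A_{i,j} > 0\}$, since by definition $A_{i,j} = 1$ exactly when $(v_i, v_j) \in E$ and is $0$ otherwise. The leaf set consists of those $v_i$ whose row of $A$ is identically zero (no children), carrying along the labeling by $X$. Finally the root $r$ is recovered as the unique vertex of in-degree $0$, i.e., the unique $v_i$ with $A_{k,i} = 0$ for all $k$: because $N$ is rooted, every non-root vertex is reachable from $r$ and hence has a parent, while $r$ has none since $N$ is acyclic. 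Thus $N = (V, E, r, X)$ is completely determined by $A$ (up to isomorphism).

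Second I would reconstruct $N$ from $H$. By Corollary 3.5 we have $A = I - H^{-1}$, so $A$ is obtained from $H$ by a single inversion and subtraction, and applying the first part to this $A$ then yields $N$. I expect no real obstacle here, since both halves are essentially bookkeeping resting on results already in hand; the only point deserving a word of care is the identification of the root, where the rooted and acyclic hypotheses are used together to guarantee that the vertex of in-degree $0$ both exists and is unique.
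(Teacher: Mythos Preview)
Your proposal is correct and follows essentially the same approach as the paper: reconstruct $N$ from $A$ via the fact that $A_{i,j}>0$ iff $(v_i,v_j)\in E$, and then reduce the $H$ case to the $A$ case using Corollary 3.5. You supply more detail than the paper (explicitly recovering the leaves and the root), but the underlying argument is the same.
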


\begin{proof}
$N$ can be reconstructed from $A$ since $A_{u,v} > 0$ iff there is an arc $ (u,v) $.  But by Corollary 3.5, $A$ can be found from $H$.  Hence $N$ can be reconstructed from $H$ as well. 
\end{proof}

\begin{thm} 
Suppose $N$ has $A$ and $H$ as above.  Then\\
(i) $H-I = AH = HA$.\\
(ii) If $u \neq v$ and $u$ has children $c_1, \cdots , c_k$ then
$H_{u,v} = \sum_{i=1}^k H_{c_i, v}$.\\
(iii) If $u \neq v$ and $v$ has parents $q_1, \cdots , q_k$ then
$H_{u,v} = \sum_{i=1}^k H_{u, q_i}$.
\end{thm}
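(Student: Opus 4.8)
The plan is to establish part (i) first as a single algebraic identity and then read off parts (ii) and (iii) by inspecting individual matrix entries. For part (i), I would invoke Corollary 3.5, which gives $H = (I-A)^{-1}$, equivalently $(I-A)H = H(I-A) = I$. Expanding $(I-A)H = I$ yields $H - AH = I$, so $AH = H - I$; and expanding $H(I-A) = I$ yields $H - HA = I$, so $HA = H - I$. Hence $AH = HA = H - I$, which is exactly (i). Alternatively one could avoid Corollary 3.5 and argue directly from the polynomial form $H = I + A + \cdots + A^L$: by telescoping, $AH = A + A^2 + \cdots + A^{L+1} = A + \cdots + A^L = H - I$, since $A^{L+1} = 0$ by Lemma 3.1, and symmetrically $HA = H - I$.

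For part (ii), I would compute the $(u,v)$ entry of each side of the identity $H - I = AH$ from (i). Since $u \neq v$, we have $(H-I)_{u,v} = H_{u,v}$ because $I_{u,v} = 0$. On the other side, $(AH)_{u,v} = \sum_w A_{u,w}\,H_{w,v}$. The key observation is that $A_{u,w} = 1$ precisely when $w$ is a child of $u$, and $A_{u,w} = 0$ otherwise; here it matters that $E$ is a set, so the entries of $A$ are $0$ or $1$ with no multiplicities. Thus the only surviving terms are those with $w \in \{c_1, \ldots, c_k\}$, giving $(AH)_{u,v} = \sum_{i=1}^k H_{c_i, v}$, and equating the two expressions proves (ii).

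Part (iii) is the transpose-flavored analogue, obtained from the identity $H - I = HA$ instead. Again $(H - I)_{u,v} = H_{u,v}$ since $u \neq v$, while $(HA)_{u,v} = \sum_w H_{u,w}\,A_{w,v}$. Now $A_{w,v} = 1$ exactly when $w$ is a parent of $v$, so only $w \in \{q_1, \ldots, q_k\}$ contribute, yielding $\sum_{i=1}^k H_{u,q_i}$, and equating finishes the proof.

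I do not expect a serious obstacle here; the whole argument is a short matrix computation. The only points requiring care are remembering the hypothesis $u \neq v$ so that $I_{u,v}$ vanishes, and matching the correct identity ($AH$ versus $HA$) to the correct summation (over children in (ii) versus over parents in (iii)). As a sanity check one can interpret (ii) and (iii) combinatorially through Theorem 3.3: every directed path of positive length from $u$ to $v$ leaves $u$ through a unique first child $c_i$ (respectively enters $v$ through a unique last parent $q_i$), so the paths partition according to that first child (respectively last parent), and this disjoint count is precisely the combinatorial shadow of the matrix identities above.
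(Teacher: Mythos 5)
Your proposal is correct and follows essentially the same route as the paper: part (i) by the telescoping identity $AH = A + \cdots + A^{L+1} = H - I$ (your Corollary 3.5 variant is equivalent, since that corollary itself rests on the same telescoping computation), and parts (ii) and (iii) by reading off the $(u,v)$ entries of $H - I = AH$ and $H - I = HA$, using $I_{u,v}=0$ for $u \neq v$ and the fact that $A_{u,w}=1$ exactly when $w$ is a child of $u$. Even your closing combinatorial sanity check mirrors the paper's remark following the theorem about paths decomposing by their first or last arc.
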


\begin{proof}
 (i)
$H-I = A + A^2 + \cdots  + A^L$   where $A^{L+1} = 0$\\
$= A + A^2 + \cdots  + A^L + A^{L+1}$ 
$= A( I + A + \cdots  + A^L) = AH$.
A similar argument applies for $HA$.

(ii) From (i), $H-I =AH$.
Since $u \neq v$, 
$H_{u,v} = (H-I)_{u,v} = (AH)_{u,v}  = \sum A_{u,w} H_{w,v}$ where $w$ ranges over the vertices of $N$.
But  if $A_{u,w} > 0$, $w$ can only have values $c_i$, and $A_{u,c_i} =1$.

(iii) From (i), $H-I = H A$.
Since $u \neq v$, 
$H_{u,v} = (H-I)_{u,v} = (HA)_{u,v} =  \sum H_{u,w} A_{w,v}$ where $w$ ranges over the vertices of $N$.  
But if $A_{w,v} >0$, then $w$ can only have values $q_i$ and $A_{q_i, v} = 1$.
\end{proof}

Note that (ii) says that each path from $u$ to $v$ must go through an initial arc $(u,c_i)$.  Similarly  (iii) says that each path from $u$ to $v$ must go through a final arc $(q_i, v) $.

\section { The inheritance metric on DC $X$-networks} 

Fix $X$ and let $DC(X) $ denote the set of isomorphism classes of DC $X$-networks.  We regard each member of $DC(X) $ as a rooted acyclic directed $X$-network $N=(V,E,r,X) $ that is distinct-cluster.   Two members of $DC(X)$ are considered equal if and only if they are isomorphic.  

Let $N = (V,E,r,X) $ and $N' = (V',E', r', X) $ be in $DC(X) $.  Let $A$ and $A'$ be the adjacency matrices of $N$ and $N'$ respectively, and let $H$ and $H'$ be the inheritance matrices of $N$ and $N'$ respectively.  Since $N$ is DC, we may identify each vertex $u$ with its cluster $cl(u;N)$, so that $H_{u,v}$ is defined if $u$ and $v$ are clusters from $X$ that occur as clusters of vertices of $N$.  In particular if $u$ and $v$ are clusters in both $N$ and $N'$, then both $H_{u,v}$ and $H'_{u,v}$ make sense.  Moreover both $r$ and $r'$ are identified with the cluster $cl(r;N) = X$ while each leaf $x \in X$ is identified with the singleton set $\{x\}$.  

Let $S$ be the set of all clusters of $N$ and $S'$ the set of all clusters of $N'$.  Let $C$ denote a subset of $\mathcal{P}(X)$ that contains both $S$ and $S'$; thus $C$ is a collection of nonempty subsets of $X$ containing $S\cup S'$.   Define the \emph{inheritance matrix $_CH$ of $N$ over $C$} as follows if $u$ and $v$ are in $C$:

	\begin{gather*}
	_CH_{u,v} = 
\begin{cases}
	H_{u,v}  & \text{ if both } u \text{ and } v \text{ are in } S\\
	0 & \text{ otherwise}
\end{cases}
\end{gather*}

Note that if $u \in C$ but $u \notin S$ then $_CH_{u,u} = 0$.  

Similarly we define the inheritance matrix $_CH'$ of $N'$ over $C$.  
Define 
$$_CD(N,N') = \sum \Big[|_CH_{u,v} - \,_CH'_{u,v} | : u \in C, v \in C\Big].$$

The following lemma says that $\,_CD(N,N') $ does not depend on the choice of $C$.   We thus obtain the same number using different choices of $C$.  

\begin{lem} 
Let $N$ and $N'$ be DC $X$-networks with cluster sets $S$ and $S'$ respectively.  Let $C$ and $C'$ be sets of $X$-clusters that contain both $S$ and $S'$.  Then 
$_CD(N,N') = \,_{C'}D(N,N') $.
\end{lem}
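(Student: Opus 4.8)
The plan is to reduce both sums to a summation over a common, $C$-independent index set on which the summands agree term by term. First I would note that the defining rule for $_CH_{u,v}$ refers only to $S$ and $H$ (namely, $_CH_{u,v} = H_{u,v}$ when $u,v \in S$ and $0$ otherwise) and makes no further reference to the ambient set $C$. Consequently, if $u$ and $v$ both lie in $C$ and also in $C'$, then $_CH_{u,v} = \,_{C'}H_{u,v}$, and the same holds for the primed matrices $_CH'$ and $_{C'}H'$. Since $S \cup S' \subseteq C$ and $S \cup S' \subseteq C'$, the matrices $_CH$ and $_{C'}H$ agree on all pairs drawn from $S \cup S'$, and likewise for the primed matrices.

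Next I would locate the support of the summand. The term $|_CH_{u,v} - \,_CH'_{u,v}|$ can be nonzero only if $_CH_{u,v} \neq 0$ or $_CH'_{u,v} \neq 0$; the first forces $u, v \in S$ and the second forces $u, v \in S'$, so in either case $u \in S \cup S'$ and $v \in S \cup S'$. Hence every pair $(u,v)$ with $u \notin S \cup S'$ or $v \notin S \cup S'$ contributes $0$. Because $S \cup S' \subseteq C$, I may therefore restrict the summation defining $_CD(N,N')$ to pairs with $u, v \in S \cup S'$ without changing its value:
$$_CD(N,N') = \sum \Big[ |_CH_{u,v} - \,_CH'_{u,v}| : u \in S \cup S',\ v \in S \cup S' \Big].$$
The identical reasoning applied to $C'$ yields the same formula with $C'$ in place of $C$. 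By the first step the corresponding summands agree on $(S \cup S') \times (S \cup S')$, so the two sums coincide, proving $_CD(N,N') = \,_{C'}D(N,N')$.

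The argument is essentially bookkeeping and presents no genuine obstacle. The one point deserving care is the support claim: one must use that $_CH_{u,v}$ vanishes unless \emph{both} $u$ and $v$ lie in $S$, and $_CH'_{u,v}$ unless \emph{both} lie in $S'$, so that any pair touching a cluster outside $S \cup S'$ is killed in both networks simultaneously. This is precisely what licenses collapsing each sum onto the shared, $C$-independent index set $(S \cup S') \times (S \cup S')$.
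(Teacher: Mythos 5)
Your proposal is correct and follows essentially the same route as the paper's own proof: both arguments observe that a summand $|_CH_{u,v} - \,_CH'_{u,v}|$ can be nonzero only when both $u,v \in S$ or both $u,v \in S'$, and that on such pairs the value is independent of the ambient set $C$. Your write-up merely makes the bookkeeping (restriction of the sum to $(S\cup S')\times(S\cup S')$) more explicit than the paper does.
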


\begin{proof}
 The only nonzero terms $|_CH_{u,v} - \,_CH'_{u,v} |$ or $|_{C'}H_{u,v} - \,_{C'}H'_{u,v} |$ in the definitions occur when 
(a) both $u$ and $v$ are in $S$ or 
(b) both $u$ and $v$ are in $S'$.  
In either case, $|_CH_{u,v} - \,_CH'_{u,v} |=|_{C'}H_{u,v} - \,_{C'}H'_{u,v} |$.   
\end{proof}

Define $D(N,N') $ to be the common value $_CD(N,N') $ for any choice of $C$ containing both $S$ and $S'$.  We shall see below that $D(N,N') $ is a metric, so we will call $D(N,N') $ the \emph{inheritance metric} or \emph{inheritance distance} between $N$ and $N'$.  Most commonly one will utilize $C=S\cup S'$, but different choices may be convenient when one is comparing several different networks.  

Roughly, $D(N,N')$ counts the sum, over all $u$ and $v$, of the absolute value of the difference between the number of directed paths from $u$ to $v$ in $N$ and the number of directed paths from $u$ to $v$ in $N'$.   

\begin{thm} 
Fix $X$.  Then  $D$ defines a metric on $DC(X) $.  In particular for any DC $X$-networks $N$, $N'$, and $N''$ we have\\
(i) $D(N,N') \geq 0$.\\
(ii) $D(N,N') = D(N', N) $.\\
(iii) $D(N,N' ) \leq D(N, N'') + D(N'', N') $.\\
(iv) $D(N,N') = 0$ iff $N = N'$.
\end{thm}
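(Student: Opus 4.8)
The plan is to verify the four metric axioms, where (i)--(iii) reduce to standard properties of the $\ell^1$-distance and only (iv) requires genuine use of the distinct-cluster structure. First I would fix a single set $C$ containing both cluster sets (for any triple of networks, I take $C$ to contain $S$, $S'$, and $S''$ simultaneously), legitimate by Lemma 4.1, so that all the matrices $_CH$, $_CH'$, $_CH''$ are indexed over the same finite set $C \times C$. Then $D(N,N')$ is literally the $\ell^1$-norm $\sum_{u,v}|{_CH_{u,v}} - {_CH'_{u,v}}|$ of the difference of two real matrices. Property (i) is immediate since each summand is an absolute value; property (ii) follows from $|a-b|=|b-a|$; and property (iii) follows term-by-term from the scalar triangle inequality $|{_CH_{u,v}} - {_CH'_{u,v}}| \le |{_CH_{u,v}} - {_CH''_{u,v}}| + |{_CH''_{u,v}} - {_CH'_{u,v}}|$ followed by summing over all pairs $(u,v)$.

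The substantive part is (iv), and the nontrivial direction is that $D(N,N')=0$ implies $N=N'$ (the converse is trivial: if $N=N'$ then $_CH = {_CH'}$, so every summand vanishes). Here is where the distinct-cluster hypothesis and the earlier reconstruction results do the work. Suppose $D(N,N')=0$ with $C = S \cup S'$. Then every summand vanishes, so $_CH_{u,v} = {_CH'_{u,v}}$ for all $u,v \in C$. The first step is to show $S = S'$, i.e.\ the two networks realize exactly the same clusters. For this I would use the diagonal: by Theorem 3.2, $H_{v,v}=1$ for every genuine vertex $v$, so $_CH_{u,u}=1$ precisely when $u \in S$ and $_CH_{u,u}=0$ when $u \in C \setminus S$; likewise $_CH'_{u,u}=1$ iff $u \in S'$. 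Since $_CH_{u,u} = {_CH'_{u,u}}$ for all $u \in C$, the sets $\{u : {_CH_{u,u}}=1\}$ and $\{u: {_CH'_{u,u}}=1\}$ coincide, giving $S = S'$.

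Once $S=S'$, identify the vertex sets of $N$ and $N'$ with this common cluster set (here I use that both networks are DC, so each vertex is uniquely named by its cluster, exactly the identification set up before Lemma 4.1). Over this common index set the equality $_CH_{u,v} = {_CH'_{u,v}}$ for all pairs means $H(N) = H(N')$ as matrices on the same vertex set. By Corollary 3.6, $N$ can be reconstructed from its inheritance matrix $H$ (via $A = I - H^{-1}$), so equal inheritance matrices force equal adjacency matrices, hence $N = N'$ as $X$-networks up to the leaf-preserving isomorphism built into the identification. I expect the main obstacle to be bookkeeping about the identification of vertices with clusters: one must argue carefully that the padded matrix $_CH$ restricted to the genuine index set $S$ agrees with the true inheritance matrix $H$, and that the reconstruction of Corollary 3.6 respects the leaf labels so that the resulting isomorphism is an isomorphism of $X$-networks rather than merely of abstract digraphs. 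This is where the DC property is essential, since without it distinct vertices could share a cluster and the cluster-indexed matrix would not determine the network.
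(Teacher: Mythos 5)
Your proposal is correct and follows essentially the same route as the paper: (i)--(iii) via the termwise $\ell^1$ properties over a common cluster set $C$, and (iv) by reading off $S=S'$ from the diagonal entries ($_CH_{u,u}=1$ iff $u\in S$), concluding $H=H'$, and then recovering $A = I - H^{-1} = I - H'^{-1} = A'$ via Corollary 3.5/3.6. The only cosmetic difference is that the paper disposes of the $|X|=1$ case separately, which your uniform argument handles without needing to.
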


\begin{proof}
Pick any $C$ that contains both $S$ and $S'$.  It is immediate that $_CD(N,N') \geq 0$ and that $_CD(N,N') = _CD(N',N) $.  If $N''$ is a third $X$-network with cluster set $S''$ and $C$ contains $S$, $S'$, and $S''$, then when $u$ and $v$ range over members of $C$ we have\\
$_CD(N,N') = \sum |_CH_{u,v} - \,_CH'_{u,v} |$\\
$= \sum |\,_CH_{u,v} -\,_CH''_{u,v} + \,_CH''_{u,v} -  \,_CH'_{u,v} |$\\
$\leq \sum \big[ |\,_CH_{u,v} -\,_CH''_{u,v}| + |\,_CH''_{u,v} -  \,_CH'_{u,v} |\big]$\\
$= \sum |\,_CH_{u,v} -\,_CH''_{u,v}| +\sum |\,_CH''_{u,v} - \, _CH'_{u,v} |$\\
$= \,_CD(N,N'') + \,_CD(N'',N'),$\\
proving the triangle inequality (iii).  

There remains only to show that  $_CD(N,N') = 0$  iff $N = N'$.  It is immediate that   $_CD(N,N) = 0$.   Conversely, suppose $_CD(N,N') = 0$.  If $|X|= 1$, then the  network with only a single vertex is the only DC $X$-network, whence $N = N'$.  Thus we may assume $|X| > 1$.  

Since $\,_CD(N,N') = 0$ it follows that for each $u$ and $v$ in $C$, $|\,_CH_{u,v} - \,_CH'_{u,v} | = 0$ so  
$\,_CH_{u,v} =  \,_CH'_{u,v}.$  
In particular for each $u$,  $\,_CH_{u,u} = \,_CH'_{u,u}$.  But $u \in S$ iff  $\,_CH_{u,u} = 1$ and $u \in S'$ iff  $\,_CH'_{u,u} = 1$ by Theorem 3.3.  Hence $S = S'$, so $N$ and $N'$ have the same vertex sets.  

If both $u$ and $v$ are in $S = S'$, then
since $\,_CH_{u,v} = \,_CH'_{u,v}$
it follows that  $H_{u,v} = H'_{u,v}$.   Thus $H = H'$.  By Corollary 3.5,
$A =  I - H^{-1} = I - H'^{-1} = A'$.
It follows that $N = N'$.
\end{proof}

The following Theorem 4.3 tells the distance between two DC $X$-networks chosen to be especially distant from each other.  

The \emph{trivial tree} $Tr(X)$ is the tree whose clusters are all the trivial clusters $Tr(X)$ with one arc $ (X,x) $ for each $x \in X$.  
The \emph{regular network of all nonempty subsets of $X$} is $\mathcal{P}(X) $ with vertex set $\mathcal{P}(X)$ and an arc $ (A,B) $ iff $B \subset A$ and there is no $C \in \mathcal{P}(X) $ such that $B \subset C \subset A$.  

\begin{thm} 
Let $N$ and $N'$ be DC $X$-networks, where $|X| = n$.  Then\\
(1) $D(N,N') $ is an integer.  \\
(2) Suppose $Tr(X)$ is the trivial tree on $X$ and $\mathcal{P}(X) $ is the regular $X$-network of all nonempty subsets of X.   Then for $n \geq 2$, 
$$D(\mathcal{P}(X),Tr(X)) = \sum_{k=1}^n {n \choose k} \sum_{j=1}^k {k \choose j} (k-j)! - 2n - 1.$$
\end{thm}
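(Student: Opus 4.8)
Part (1) is immediate from the path-counting interpretation and I would dispatch it in a sentence: by Theorem 3.3 each entry $H_{u,v}$ counts directed paths, hence is a nonnegative integer, so every entry $_CH_{u,v}$ of $_CH$ (and of $_CH'$) is an integer. Since $D(N,N')$ is a finite sum of terms $|_CH_{u,v} - \,_CH'_{u,v}|$, each an absolute difference of integers, it is an integer.

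For part (2) I would take $C = \mathcal{P}(X)$, which contains the cluster set $S = \mathcal{P}(X)$ of the regular network and the cluster set $S' = Tr(X)$ of the trivial tree. The crux is to evaluate the two inheritance matrices explicitly. In $\mathcal{P}(X)$ the arcs are exactly the covering pairs $(A,B)$ with $B \subset A$ and $|A| = |B|+1$, so a directed path from $u$ to $v$ exists iff $v \subseteq u$ and is determined by the order in which the $|u|-|v|$ elements of $u \setminus v$ are deleted (all intermediate sets contain $v$, hence stay nonempty). This yields the key formula $H_{u,v} = (|u|-|v|)!$ when $v \subseteq u$, and $0$ otherwise. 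For the trivial tree, the only nonzero entries among $u,v \in S'$ are $H'_{X,X} = H'_{\{x\},\{x\}} = 1$ and $H'_{X,\{x\}} = 1$.

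With $C = S$ we have $_CH = H$, while $_CH'$ agrees with $H'$ on $S' \times S'$ and vanishes elsewhere, so I would split the defining sum as
$$D = \sum_{u,v \in S'} |H_{u,v} - H'_{u,v}| + \sum_{(u,v)\ \mathrm{not\ both\ in}\ S'} H_{u,v}.$$
In the first sum every entry cancels except the root-to-leaf entries, where $H_{X,\{x\}} = (n-1)!$ meets $H'_{X,\{x\}} = 1$, contributing $n\big[(n-1)! - 1\big] = n! - n$; here the hypothesis $n \geq 2$ is used, both to guarantee $(n-1)! \geq 1$ and to ensure $X \not\subseteq \{x\}$ (so the entries $H_{\{x\},X}$ vanish). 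For the second sum I would first compute the full total $T = \sum_{u,v \in \mathcal{P}(X)} H_{u,v}$ by grouping over $|u| = k$ and $|v| = j$ with $v \subseteq u$, giving exactly $T = \sum_{k=1}^n \binom{n}{k} \sum_{j=1}^k \binom{k}{j}(k-j)!$, the leading double sum in the statement; subtracting the $S' \times S'$ contribution $1 + n(n-1)! + n = 1 + n! + n$ leaves $T - 1 - n! - n$. Adding the two pieces gives $D = (n! - n) + (T - 1 - n! - n) = T - 2n - 1$, as claimed.

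The only genuine work is the combinatorial bookkeeping: proving $H_{u,v} = (|u|-|v|)!$ cleanly and then tracking which entries survive in each sum. I expect the decisive step to be the identification of the full total $T$ with the advertised double-binomial sum, since once $H_{u,v} = (|u|-|v|)!$ is in hand everything else is routine cancellation, with the correction $-2n-1$ emerging precisely from the trivial-tree root-to-leaf entries together with the agreeing diagonal and singleton entries.
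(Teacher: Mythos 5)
Your proposal is correct and takes essentially the same route as the paper: both rest on the key count $H_{u,v} = (|u|-|v|)!$ for $v \subseteq u$ in $\mathcal{P}(X)$, the resulting double-binomial total $T$, and the identification of the $2n+1$ unit entries of the trivial tree's matrix. The only difference is bookkeeping — the paper converts $\sum |H_{u,v} - {}_CH'_{u,v}|$ into $\sum H_{u,v} - \sum {}_CH'_{u,v}$ via the entrywise dominance $H_{u,v} \geq {}_CH'_{u,v}$ (since $H_{u,v} \geq 1$ whenever ${}_CH'_{u,v} = 1$), whereas you split the index set into $S' \times S'$ and its complement and cancel block by block; both are valid.
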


\begin{proof}
 (1) Since both $_CH_{u,v}$ and $_CH'_{u,v}$ are integers, (1) is immediate.

(2) We first observe that in $\mathcal{P}(X) $, if $A \subset B$ and $|B| - |A| = k$, then the number of paths from $B$ to $A$ in $N$  is $k! $.  To see this, list the elements of $B-A$ as 1, $\cdots$, $k$.  Each arc has as its endpoint a subset containing one fewer member of $X$ than the subset at the starting point of the arc. Denote the path by $B = S_0, S_1, S_2, \cdots , S_k = A$.  But there are $k$ ways to choose $S_1$ (deleting one member), then $ (k-1) $ ways to choose $S_2$, then $ (k-2) $ ways to choose $S_3$, etc.  The total number of paths is then $k! $.

Now we find $\sum H_{B,A}$ for all possible $B$ and $A$.
If the set $B$ has size $k$ then the number of ways to choose $B$ is ${n \choose k}$, where $1 \leq k \leq╩n$.  If $B$ has been chosen, then the number of ways to choose $A$ is ${k \choose j}$ , where $j = |A|$, and $1 \leq j \leq╩k$.  The number of paths from $B$ to $A$ is then $ (k-j)! $.  Hence the total number of paths is
$$\sum_{k=1}^n {n \choose k} \sum_{j=1}^k {k \choose j} (k-j)!.$$

Note that $Tr(X)$ has one arc $ (X,x) $ for each $x\in X$.  Hence $H_{u.v}(Tr(X)) = 0$ unless $u = r$ and $v \in X$ or else $u = v = r$, or else $u = x = v$ for some $x \in X$.  In any of these cases $H_{u.v}(Tr(X)) = 1$ .  Hence the total sum of the entries is 
$n + 1 + n = 2n+1$.

The difference yields (2) since in any case when $H_{u.v}(Tr(X)) = 1$ then\\ $H_{u.v}(\mathcal{P}(X)) \geq 1$.  
\end{proof}

If $B_n = D(\mathcal{P}(X),Tr(X)) $ with $|X|=n$, then $B_2 = 0$, $B_3 = 15$, $B_4 = 94$, $B_5 = 535$, $B_6 = 3287$.  I conjecture that for any $n\geq 2$, $D(\mathcal{P}(X),Tr(X)) $ is the maximum value of any $D(N,N') $.

\section { Cluster-preserving simplifications} 

Let $N  = (V,E,r,X) $ be a DC $X$-network.   We consider two simplifying steps:

(1) Suppose $v \in V$, $v \notin X$, $v \neq r$. We delete vertex $v$ by \emph{passing through} to form a new network $N'$ as follows:
Let $q_1, \cdots , q_k$ denote all the parents of $v$, and let $c_1, \cdots ,c_m$ denote all the children of $v$.  Remove $v$ and all arcs involving $v$ from $E$.
Add new arcs $ (q_i,c_j) $ for $i = 1, \cdots , k$; $j = 1, \cdots , m$.  (If one already exists, then we retain just the single copy.)  Let $N'$ denote the result.  
Thus $N' = (V', E', r, X) $ where $V' = V - \{v\}$, $E' = \big[E - \{(q_i,v), (v,c_j)\}\big] \cup \{(q_i,c_j)\} $.  
Alternatively we will denote the result as $N' = D(v) N$, meaning deletion of $v$ from $N$.

(2) Let $ (a,b) \in E$ be a redundant arc in $N$.  
Form $N' = (V,E',r,X) $ where 
$E' = E - \{(a,b)\} $.
Thus $N'$ is $N$ with arc $ (a,b) $ removed.
Alternatively we will denote the result as $D(a,b) N$, meaning deletion of the redundant arc $(a,b)$ from $N$.

Figure 2 shows a DC $X$-network $N$ with $X = \{1,2,3,4\}$ and redundant arc $(10,4) $. The networks $D(6)N$ and $D(10,4)N$ are also shown.  Vertices continue to be labeled with the same labels of $N$ when they have the same clusters.  Note that $D(6)N$ contains the redundant arcs $(8,2)$ and $(9,3)$ which were not present in $N$.  In general, deleting a vertex can introduce new redundant arcs.

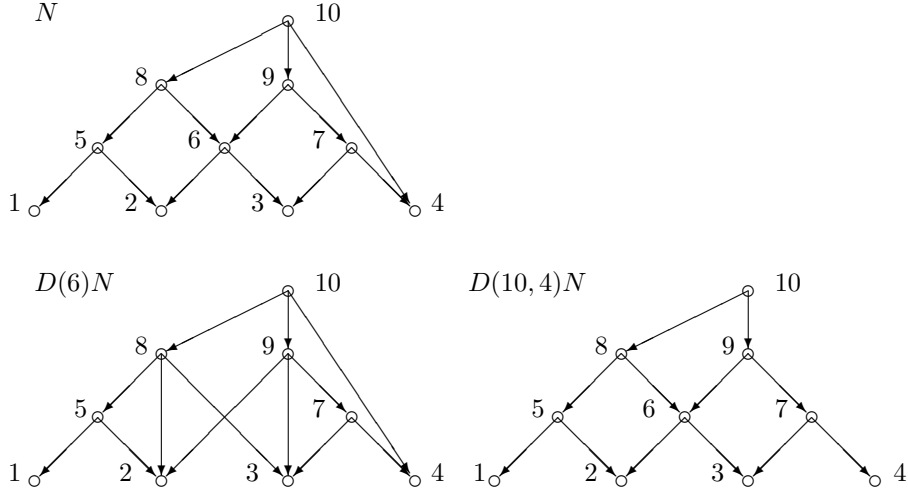
\begin{figure}[!htb]  
\begin{center}

\begin{picture}(320,200) (0,0)  

\put(34,136){\vector(-1,-1){22}}
\put(34,136){\circle{4}}
\put(34,136){\vector(1,-1){22}}
\put(58,160){\vector(-1,-1){22}}
\put(58,160){\vector(1,-1){22}}
\put(58,160){\circle{4}}
\put(106,160){\vector(1,-1){22}}
\put(106,160){\vector(-1,-1){22}}
\put(106,160){\circle{4}}
\put(82,136){\vector(-1,-1){22}}
\put(82,136){\vector(1,-1){22}}
\put(82,136){\circle{4}}
\put(130,136){\vector(-1,-1){22}}
\put(130,136){\vector(1,-1){22}}
\put(130,136){\circle{4}}
\put(106,184){\vector(-2,-1){46}}
\put(106,184){\vector(0,-1){22}}
\put(106,184){\vector(2,-3){46}}
\put(106,184){\circle{4}}
\put(106,112){\circle{4}}
\put(58,112){\circle{4}}
\put(10,112){\circle{4}}
\put(154,112){\circle{4}}

\put(0,112) {1}
\put(44,112) {2}
\put(92,112){3}
\put(160,112){4}
\put(25,136){5}
\put(68,136){6}
\put(115,136){7}
\put(48,160){8}
\put(96,160){9}
\put(116,184){10}
\put(10,184){$N$}

\put(34,34){\vector(-1,-1){22}}
\put(34,34){\vector(1,-1){22}}
\put(34,34){\circle{4}}
\put(58,58){\vector(-1,-1){22}}
\put(58,58){\vector(1,-1){46}}
\put(58,58){\vector(0,-1){46}}
\put(58,58){\circle{4}}
\put(106,58){\vector(1,-1){22}}
\put(106,58){\vector(-1,-1){46}}
\put(106,58){\vector(0,-1){46}}
\put(106,58){\circle{4}}
\put(130,34){\vector(-1,-1){22}}
\put(130,34){\vector(1,-1){22}}
\put(130,34){\circle{4}}
\put(106,82){\vector(-2,-1){46}}
\put(106,82){\vector(0,-1){22}}
\put(106,82){\vector(2,-3){46}}
\put(106,82){\circle{4}}
\put(106,10){\circle{4}}
\put(58,10){\circle{4}}
\put(10,10){\circle{4}}
\put(154,10){\circle{4}}

\put(0,10) {1}
\put(42,10) {2}
\put(90,10){3}
\put(160,10){4}
\put(25,34){5}
\put(115,34){7}
\put(48,58){8}
\put(96,58){9}
\put(116,82){10}
\put(10,82){$D(6)N$}

\put(208,34){\vector(-1,-1){22}}
\put(208,34){\vector(1,-1){22}}
\put(208,34){\circle{4}}
\put(232,58){\vector(-1,-1){22}}
\put(232,58){\vector(1,-1){22}}
\put(232,58){\circle{4}}
\put(280,58){\vector(1,-1){22}}
\put(280,58){\vector(-1,-1){22}}
\put(280,58){\circle{4}}
\put(256,34){\vector(-1,-1){22}}
\put(256,34){\vector(1,-1){22}}
\put(256,34){\circle{4}}
\put(304,34){\vector(-1,-1){22}}
\put(304,34){\vector(1,-1){22}}
\put(304,34){\circle{4}}
\put(280,82){\vector(-2,-1){46}}
\put(280,82){\vector(0,-1){22}}
\put(280,82){\circle{4}}
\put(184,10){\circle{4}}
\put(232,10){\circle{4}}
\put(280,10){\circle{4}}
\put(328,10){\circle{4}}

\put(176,10) {1}
\put(218,10) {2}
\put(266,10){3}
\put(335,10){4}
\put(198,34){5}
\put(240,34){6}
\put(290,34){7}
\put(222,58){8}
\put(270,58){9}
\put(290,82){10}
\put(174,82){$D(10,4)N$}

\end{picture}

\caption{  $N$  with $D(6)N$ and $D(10,4)N.$ 
 }  
\end{center}
\end{figure}

\begin{thm} 
Let $N = (V,E,r,X)$ be a DC $X$-network.  Let $v$ be a vertex, $v \neq r$, $v \notin X$.  Let $ (a,b) $ be a redundant arc of $N$.\\  
(1) For vertices $u$ and $w$ distinct from $v$, there is a directed path from $u$ to $w$ in $D(v)N$ iff there is a directed path in $N$ from $u$ to $w$.\\
(2) $D(v)N$ is a DC $X$-network.\\
(3) For every vertex $w$ in $N$ if $w \neq v$, 
$cl(w; D(v)N) = cl(w; N) $.\\
(4) For vertices $u$ and $w$, there is a directed path from $u$ to $w$ in $D(a,b)N$ iff there is a directed path in $N$ from $u$ to $w$.\\
(5) $D(a,b)N$ is a DC $X$-network.\\
(6)  For every vertex $w$ in $N$,
$cl(w; D(a,b)N) = cl(w;N) $.\\
(7) Suppose $u$ and $w$ are vertices of $N$ other than $v$.  Then $u < w$ in $N$ iff $u < w$ in $D(v) N$.\\
(8) Suppose $u$ and $w$ are vertices of $N$.   Then $u < w$ in $N$ iff $u < w$ in $D(a,b) N$.  
\end{thm}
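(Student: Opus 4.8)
The plan is to treat the two path-existence statements (1) and (4) as the engines of the whole theorem and then harvest the remaining parts as consequences. Once (1) is established, parts (3) and (7) are immediate: the cluster $cl(w;\cdot)$ is by definition $\{x\in X : w\le x\}$, and since $v\notin X$ every leaf is distinct from $v$, so (1) gives the same reachable leaves in $N$ and $D(v)N$, proving (3); and since $u<w$ means $u\le w$ with $u\ne w$, applying (1) to directed-path existence yields (7) directly. Symmetrically, (4) yields (6) and (8). So the real work lies in (1), (4), and the verifications (2) and (5) that the results are genuine DC $X$-networks.

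For (1) I would argue by substitution in both directions. Suppose $P$ is a directed path from $u$ to $w$ in $N$ with $u,w\ne v$. Because $N$ is acyclic, $P$ visits $v$ at most once; if it does, it enters along some arc $(q_i,v)$ and leaves along some $(v,c_j)$, and replacing the two-arc segment $q_i,v,c_j$ by the single arc $(q_i,c_j)$, which lies in $D(v)N$ by construction, gives a directed path in $D(v)N$ with the same endpoints; if $P$ avoids $v$ then all its arcs survive. Conversely, given a directed path $P'$ in $D(v)N$, replace each new shortcut arc $(q_i,c_j)$ by the length-two detour $q_i,v,c_j$ in $N$; this produces a directed walk from $u$ to $w$ in $N$, and a walk witnesses a directed path, so $u\le w$ in $N$. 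Part (4) is handled the same way but is easier in one direction: $D(a,b)N$ is a subgraph of $N$, so any path in $D(a,b)N$ is already a path in $N$. For the other direction, take a path $P$ in $N$ from $u$ to $w$; if it uses $(a,b)$ (at most once, by acyclicity), replace that arc by a directed path from $a$ to $b$ of length greater than $1$, which exists because $(a,b)$ is redundant and which does not itself use $(a,b)$ (its first vertex after $a$ cannot be $b$, else we would have a cycle at $b$). The result is a walk in $D(a,b)N$, hence a path.

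For (2) and (5) I would check in turn: finiteness (clear); acyclicity (a directed cycle in $D(v)N$ or $D(a,b)N$ would, under the reverse substitutions above, produce a closed directed walk of positive length in $N$, contradicting acyclicity of $N$); rootedness (by (1) resp.\ (4), every vertex is still reachable from $r$, since $r$ reaches everything in $N$ and $r\ne v$); and that the leaf set is exactly $X$. The DC property is then immediate from (3) resp.\ (6): distinct vertices of the simplified network are distinct vertices of $N$, hence have distinct clusters in $N$, and therefore, by cluster preservation, distinct clusters in the simplified network.

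I expect the leaf-set verification to be the main obstacle, since it is where the hypotheses are genuinely used. For $D(v)N$ the members of $X$ stay childless (the new arcs $(q_i,c_j)$ only add in-arcs to the $c_j$), and no new leaf is created: any $w\ne v$ that was internal in $N$ has out-degree at least $2$ in $N$, a consequence of $N$ being DC proved in Section 2, so even if $v$ was one of its children $w$ retains a child; and if $w$ was a parent of $v$ it inherits the children of $v$, which are nonempty since $v\notin X$. For $D(a,b)N$, removing $(a,b)$ cannot strand $a$ as a new leaf precisely because $(a,b)$ is redundant: the length-greater-than-one detour from $a$ to $b$ begins with an arc $(a,v_1)$ with $v_1\ne b$, so $a$ still has a child. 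These observations, together with the fact that neither operation deletes a leaf, pin the leaf set to $X$ and complete (2) and (5).
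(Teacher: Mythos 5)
Your proposal is correct and follows essentially the same route as the paper: both establish (1) and (4) by the two-way path-substitution arguments (shortcut arc $\leftrightarrow$ detour through $v$, and rerouting through the redundant path for $(a,b)$), then obtain (2),(3),(5),(6) from path preservation plus the DC hypothesis, with (7),(8) as restatements of (1),(4). If anything, you are slightly more careful than the paper, which leaves the leaf-set verification and the walk-to-path extraction implicit; your explicit treatment of both is a harmless strengthening, not a different method.
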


\begin{proof}
If $N' = D(v)N$, then $ (V',E') $ is a directed graph. By the restriction on the choice of $v$, it follows that $V'$ contains $r$ and each member of $X$.  

We first show (1).  Suppose $u$ and $w$ are vertices of $N$ distinct from $v$.   First suppose that there is a directed path from $u$ to $w$ in $N$.  If $u = v_0, v_1, \cdots , v_k = w$ is a directed path in $N$ from $u$ to $w$, then the same path is a directed path from $u$ to $w$ in $N'$ if no vertex $v_i$ equals $v$.  If, on the other hand  $v_j = v$ for some $0 < j < k$  then $v_{j-1}$ is a parent of $v$ and $v_{j+1}$ is a child of $v$.  Hence the directed path  $u = v_0,  v_1, \cdots , v_{j-1}, v_{j+1}, \cdots , v_k = w$ is a directed  path from $u$ to $w$ in $N'$.  

Conversely let $u = v_0, v_1, \cdots , v_k = w$ be a directed path in $N'$.  The only way this could fail to be a path in $N$ is if, for some $j$, $ (v_{j-1}, v_j) $ is not an arc of $N$.  By construction this means that $v_{j-1}$ is a parent of $v$ and $v_j$ is a child of $v$.  Hence $u = v_0, v_1, \cdots , v_{j-1}, v, v_j, \cdots , v_k = w$ is a directed path from $u$ to $w$ in $N$.  This proves (1).  

Since $N$ was acyclic, it follows that $N'$ is acyclic because a cycle starting and ending at $w$ in $N'$ would imply a cycle in $N$ as well.   (3) follows from (1) since for $x \in X$, $x \in cl(u;N) $ iff there is a directed path in $N$ from $u$ to $x$, and similarly for $N'$.  But then $N'$ is distinct-cluster since $N$ was. In addition $r$ remains a root of $N'$ since for each vertex $w$ there remains a path from $r$ to $w$. Thus (2) is true.

Now let $N' = D(a,b) N$.  Thus $N' = (V,E') $ where  $E' = E - \{(a,b)\} $.  Note that  $(V,E') $ is a directed graph. 

To prove (4), let $u$ and $w$ be vertices of $N$. Note that any directed path in $N'$ is also a directed path in $N$.  Conversely, suppose $u = v_0, v_1, \cdots , v_n = w$ is a path in $N$ from $u$ to $w$.  If there is no $j$  such that $ (a,b) = (v_j, v_{j+1})$, then it remains a path from $u$ to $v$ in $N'$.  If, however, $a = v_j$, $b = v_{j+1}$, then since $ (a,b) $ is redundant we may choose a path $a = u_0, \cdots , u_m = b$ from $a$ to $b$ in $N$ with $m > 1$ that does not include the arc $ (a,b) $.   Then
$u = v_0, \cdots, v_j=a=u_0, u_1, \cdots , u_m = b =v_{j+1}, v_{j+2}, \cdots, v_n=w$ is a path in $N'$ from $u$ to $w$.  This proves (4).

Since $N$ was acyclic, by (4) $N'$ is acyclic.
Note that $r$ remains a root of $N'$ since for every vertex $v$, $r \leq v$ in N, whence $r \leq v$ in $N'$.  Moreover if $u \notin X$, then $cl(u;N) = \{x \in X: u < x \text{ in } N\}$ $= \{x \in X: u < x \text{ in } N'\} = cl(u;N') $.  This proves (5) and (6).

(7) and (8) are restatements of (1) and (4) respectively.  
\end{proof}

Let $N$ and $N'$ be DC $X$-networks.  We call $N'$ a \emph{cluster-preserving simplification} (CPS) of $N$ provided there exists a sequence  $N = N_0, \cdots , N_k =N'$ of DC $X$-networks such that for $i = 0, \cdots , k-1$,
either $N_{i+1} = D(v)N_i$ for some $v \in V(N_i) $
or $N_{i+1} = D(a,b) N_i$ for some redundant arc $ (a,b) \in E(N_i) $.

\begin{thm} 
Let $N$ be a DC $X$-network and $N'$ be a CPS of $N$.  Then\\
(1) $N'$ is a DC  $X$-network.\\
(2) For each $v \in V(N') $, there exists a unique vertex $\phi(v) \in V(N) $ such that
$cl(\phi(v); N) = cl(v; N') $.\\
(3) If $ (u,v) $ is an arc of $N'$, then there is a directed path from $\phi(u) $ to $\phi(v) $ in $N$.\\
(4) If $u$ and $v$ are in $V(N') $ and there is a directed path from $\phi(u) $ to $\phi(v) $ in $N$, then there is a directed path from $u$ to $v$ in $N'$.\\
(5) Suppose $u$ and $w$ are vertices of $N'$.  Then $\phi(u) < \phi(w)$ in $N$ iff $u < w$ in $ N'$.\\
\end{thm}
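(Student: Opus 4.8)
The plan is to induct on the length $k$ of the simplification sequence $N = N_0, N_1, \ldots, N_k = N'$ witnessing that $N'$ is a CPS of $N$, controlling each individual step with Theorem 5.1. When $k = 0$ we have $N' = N$, and taking $\phi$ to be the identity makes all five claims trivial. For the inductive step write $N'' = N_{k-1}$; the truncated sequence $N_0, \ldots, N_{k-1}$ exhibits $N''$ as a CPS of $N$ of length $k-1$, so the inductive hypothesis supplies a map $\phi'' : V(N'') \to V(N)$ satisfying (1)--(5). The network $N'$ arises from $N''$ by a single operation, $N' = D(v)N''$ or $N' = D(a,b)N''$; in either case $V(N') \subseteq V(N'')$, and I define $\phi := \phi''|_{V(N')}$.

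Claim (1) is immediate from parts (2) and (5) of Theorem 5.1, which say that a single step of either kind yields a DC $X$-network. For claim (2) the key is the chain
$$cl(\phi(w); N) = cl(\phi''(w); N) = cl(w; N'') = cl(w; N'),$$
whose middle equality is the inductive hypothesis (2) for $N''$ and whose last equality is part (3) or (6) of Theorem 5.1 (the cluster of each surviving vertex is preserved; note that every vertex of $N'$ differs from the deleted vertex $v$ in the vertex-deletion case). Uniqueness of $\phi(w)$ holds because $N$ is DC, so at most one vertex of $N$ has a given cluster.

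The heart of the argument is claim (5), proved by the same induction. For $u, w \in V(N')$, part (7) or (8) of Theorem 5.1 gives $u < w$ in $N''$ iff $u < w$ in $N'$ (again the vertices of $N'$ avoid $v$), while the inductive hypothesis (5) gives $\phi''(u) < \phi''(w)$ in $N$ iff $u < w$ in $N''$. Since $\phi = \phi''|_{V(N')}$, composing these two equivalences yields $\phi(u) < \phi(w)$ in $N$ iff $u < w$ in $N'$, which is (5).

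Finally, (3) and (4) follow formally from (5) once we note that $\phi$ is injective: if $\phi(u) = \phi(w)$ then $cl(u; N') = cl(w; N')$, so $u = w$ since $N'$ is DC. For (3), an arc $(u,w)$ of $N'$ forces $u < w$ in $N'$ (there are no loops, so $u \neq w$), whence $\phi(u) < \phi(w)$ in $N$ by (5), giving the required directed path. For (4), a directed path from $\phi(u)$ to $\phi(w)$ in $N$ either has $\phi(u) = \phi(w)$, forcing $u = w$ by injectivity so the trivial path works, or has $\phi(u) < \phi(w)$, which by (5) gives $u < w$ in $N'$. The proof is routine given Theorem 5.1; I expect the only delicate point to be the bookkeeping in the vertex-deletion case, where one must confirm that the hypotheses ``$u, w \neq v$'' in parts (3), (7) of Theorem 5.1 hold automatically because $V(N') = V(N'') \setminus \{v\}$.
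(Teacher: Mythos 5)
Your proof is correct and takes essentially the same approach as the paper: both walk along the simplification sequence $N_0,\cdots,N_k$, control each single step with Theorem 5.1, and build the vertex identification step by step (the paper writes $\phi = \phi_1 \circ \phi_2 \circ \cdots \circ \phi_k$, you write an inductive restriction, and these are the same map since each $\phi_j$ is the identity on surviving vertices). The only cosmetic difference is the order of dependencies --- the paper gets (3) and (4) from Theorem 5.1(7),(8) and then (5) from (3) and (4), while you prove (5) inductively and deduce (3) and (4) from it --- which does not change the substance of the argument.
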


\begin{proof}
Let $N = N_0, \cdots , N_k = N'$ be a sequence of DC $X$-networks such that for $i = 0, \cdots , k-1$, either $N_{i+1} = D(v)N_i$ for some $v \in V(N_i) $
or $N_{i+1} = D(a,b) N_i$ for some redundant arc $ (a,b) \in E(N_i) $.

(1) is immediate from Theorem 5.1.

For (2) given $1\leq j\leq k$, define $\phi_j: V(Nj) \to V(N_{j-1})$
by $\phi_j(w) =$ the unique vertex in $V(N_{j-1})$ such that $cl(\phi_j(w);N_{j-1}) = cl(w;N_j) $.  Define $\phi:V(N') \to V(N) $ by $\phi(v) =   \phi_1 \circ \phi_2 \circ \cdots   \circ \phi_k$.   By Theorem 5.1,  $cl(\phi(v); N) = cl(v; N') $.  Since $N$ is DC, there exists at most one vertex with a given cluster, proving uniqueness.

(3) and (4) follow from (7) and (8) of Theorem 5.1.  (5) follows from (3) and (4).  
\end{proof}

Observe that by Theorem 5.2(2), each vertex in $N'$ has the same cluster as the corresponding vertex in $N$.  This justifies the name ``cluster-preserving.'' Moreover, directed paths from $u$ to $v$ in $N'$ correspond to directed paths from $\phi(u) $ to $\phi(v) $ in $N$.  Thus much essential information about $N$ is preserved in $N'$.  

We will generally identify a vertex $v$ in $N'$ with the vertex $\phi(v) $ in $N$, so we may say that each vertex of a CPS $N'$ of $N$ is also a vertex of $N$.  

The number of vertices of $N'$ could be considerably smaller than the number of vertices of $N$.  In some cases the number of arcs in $N'$ could be greater than the number of arcs in $N$.  

\begin{thm} 
Suppose that the DC $X$-network $N$ is a tree.  Then every CPS of $N$ is also a tree.
\end{thm}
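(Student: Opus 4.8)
The plan is to induct on the length $k$ of a CPS sequence $N = N_0, N_1, \ldots, N_k = N'$, maintaining the invariant that each $N_i$ is a tree. By Theorem 5.2(1) every $N_i$ is already a DC $X$-network, and by Theorem 5.1 each $N_i$ is rooted with root $r$; recall that an $X$-network is a tree exactly when no vertex is hybrid, i.e.\ when every vertex has in-degree at most $1$. So the only property I need to propagate along the sequence is hybrid-freeness, the base case $N_0 = N$ being a tree by hypothesis.

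First I would record the key structural fact that a tree has no redundant arc. Suppose $T$ is a tree and $(a,b)$ were redundant. Then there is a directed path $a = u_0, u_1, \ldots, u_m = b$ with $m > 1$ avoiding the arc $(a,b)$. Since $T$ is acyclic we must have $u_{m-1} \neq a$, for otherwise $a = u_0, \ldots, u_{m-1} = a$ would be a directed cycle of positive length. But then $b$ has the two distinct parents $a$ (via the arc) and $u_{m-1}$ (via the alternate path), so $b$ is hybrid, contradicting that $T$ is a tree. Hence no redundant arc exists in $T$. Consequently, in the inductive step, whenever $N_i$ is a tree the operation producing $N_{i+1}$ cannot be the deletion of a redundant arc $D(a,b)$, since there is no such arc to delete; it must be a pass-through deletion $N_{i+1} = D(v)N_i$.

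It then remains to show that $D(v)N_i$ is again a tree. Because $N_i$ is a tree and $v \neq r$, the internal vertex $v$ has exactly one parent, say $q$, and some children $c_1, \ldots, c_m$. The pass-through removes $v$ together with the arcs $(q,v)$ and $(v,c_j)$ and adds exactly the arcs $(q, c_j)$ for $j = 1, \ldots, m$ (there is only one parent $q$, so these are the only new arcs). The only vertices whose incoming arcs change are the children $c_j$: each $c_j$ loses its unique former parent $v$ and gains the single parent $q$, and since $c_j$'s only parent in $N_i$ was $v \neq q$, no $c_j$ ends up with two parents. Every other vertex retains its incoming arcs unchanged, and the root $r$, never being a child, gains no incoming arc. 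Thus every vertex of $D(v)N_i$ has in-degree at most $1$, so $D(v)N_i$ is a tree.

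Combining these observations, the invariant is preserved at every step, so $N' = N_k$ is a tree, as required. I expect the only genuinely non-routine point to be the observation that a tree admits no redundant arc, which is precisely what forbids the arc-deletion operation; once that is in place, the pass-through step preserves the tree structure almost immediately, since the deleted internal vertex has a single parent and merely hands its children up to that parent.
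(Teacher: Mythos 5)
Your proof is correct and follows essentially the same route as the paper's: induct on the number of operations, note that a tree has no redundant arc so each step must be a pass-through $D(v)$, and observe that since $v$ has a unique parent $q$ the pass-through hands each child of $v$ the single new parent $q$, preserving hybrid-freeness. The only difference is cosmetic: you explicitly prove the no-redundant-arc fact (which the paper merely asserts), while the paper includes an unneeded observation that $v$ has at least two children.
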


\begin{proof}
Let the CPS $N'$ of $N$ be given by $N' = D_k  \cdots   D_1 N$,  where each $D_i$ has form either $D(v) $ or $D(a,b) $.  We prove the theorem by induction on $k$.   If $k = 0$, the result is immediate.  Assume the result is true for $k$, and we will prove the result for $k+1$.  Now let the CPS $N'$ of $N$ be given by $N' = D_{k+1}  \cdots  D_1 N$.  We prove that $N'$ is a tree.

Since $N$ is a tree, it has no redundant arc, so there must be a vertex $v$ of $N$ such that $D_1 = D(v) $. Since $v \neq r$ , $v$ has at least one parent, and since $N$ is a tree, $v$ has at most one parent.   Let $q$ denote the unique parent of $v$.  Since $v \notin X$, $v$ has at least one child.  But if $v$ had only one child $c$ then $cl(v;N) = cl(c;N) $, contradicting that $N$ is distinct-cluster.  Hence $v$ has children $c_1, c_2, \cdots , c_m$ for some $m\geq 2$.  In $D(v)N$ vertex $v$ and all arcs incident with $v$ are deleted, while there are new arcs $ (q,c_i) $ for $i = 1, \cdots , m$.  Each $c_i$ had a unique parent $v$ in $N$ and now has a unique parent $q$ in $N'$.  Each vertex of $N'$ other than the root or $c_i$ has the same unique parent as in $N$.  Hence $D(v)N$ has no hybrid vertex and $D(v)N = D_1N$ is a tree.  Write $M = D_1 N$.   Now $N' = D_{k+1}  \cdots   D_2 M$.   Since there are $k$ factors and $M$ is a tree, it follows that $N'$ is a tree by the inductive hypothesis.  
\end{proof}

\section { Modifying the description of a CPS} 

A CPS $N'$ of $N$ can be written in the form  $N' = D_k \cdots   D_1 N$ where, letting  
$N_i = D_i \cdots  D_1 N$, each $D_i$ has form either $D(v) $ for $v$ a vertex of $N_{i-1}$ or $D(a,b) $  for $(a,b)$ a redundant arc of $N_{i-1}$.  Note that the redundant arc $(a,b)$ might not have been present in $N$ but rather have been introduced via some $D_j$ with $j<i-1$.  In this section we show that this description may be changed in various ways.  For example $D(v) D(w) N = D(w) D(v) N$ if both $w$ and $v$ are vertices of $N$ and $v \neq w$. 

\begin{thm} 
Let $N$ be a DC $X$-network. Suppose $v$ is a vertex and $ (a,b) $ is a redundant arc of $N$.  \\
(i)  If $w$ is a vertex and $w \neq v$, then $D(v) D(w) N = D(w) D(v) N$.\\
(ii) Assume $v \neq a$ and $v \neq b$.  Suppose further that it is false that $ (a,v) $ and $ (v,b) $ are both arcs in $N$.  Then $D(v) D(a,b) N = D(a,b) D(v) N$.  \\
(iii) Assume $v \neq a$ and $v \neq b$.  Assume that $ (a,v) $ and $ (v,b) $ are arcs in $N$.  Then $D(v) D(a,b) N = D(v) N$.  \\
 (iv) If $v = a$ , write $q_1, \cdots , q_k$ for the parents of $a$.  Then 
$$D(a) D(a,b) N = \big[\prod_{i=1}^k D(q_i,b)\big] D(a) N.$$   
(v) If $v = b$, write $c_1, \cdots , c_k$ for the children of $b$.   Then
$$D(b) D(a,b) N = \big[\prod_{j=1}^k D(a,c_j)\big] D(b) N.$$  
\end{thm}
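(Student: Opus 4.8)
The plan is to prove each identity by checking that the two sides have the same vertex set and the same arc set, since every operation $D(\cdot)$ and $D(\cdot,\cdot)$ is defined as an explicit modification of $(V,E)$. In each case the common vertex set is immediate ($V-\{v,w\}$ in (i), $V-\{v\}$ in (ii) and (iii), and $V-\{a\}$ or $V-\{b\}$ in (iv) and (v)), and Theorem 5.1 guarantees that every composite appearing is again a DC $X$-network, so only the arc sets require work. For (i) I would establish the single symmetric description: for $x,y\notin\{v,w\}$, the pair $(x,y)$ is an arc of $D(v)D(w)N$ iff $N$ contains a directed path from $x$ to $y$ all of whose interior vertices lie in $\{v,w\}$. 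Unwinding ``passing through'' twice, these paths are exactly $x\to y$, $x\to v\to y$, $x\to w\to y$, $x\to v\to w\to y$, and $x\to w\to v\to y$ (acyclicity forbids revisiting $v$ or $w$). Since this description is symmetric in $v$ and $w$, it also describes $D(w)D(v)N$, giving (i).

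For (ii) and (iii) the two arc sets can differ only on the single pair $(a,b)$, because $v\neq a,b$ means every other arc is handled identically by the two orders. Passing through $v$ re-creates $(a,b)$ exactly when $(a,v)$ and $(v,b)$ are both arcs of $N$. Hence in $D(v)D(a,b)N$ the arc $(a,b)$ is present iff $(a,v),(v,b)$ are both arcs, whereas in $D(a,b)D(v)N$ it is always absent and in $D(v)N$ it is always present. Reading off these three facts gives (ii) (when $(a,v),(v,b)$ are not both arcs, $D(v)D(a,b)N$ and $D(a,b)D(v)N$ have identical arc sets) and (iii) (when they are both arcs, $D(v)D(a,b)N$ agrees with $D(v)N$). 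I would also check separately that $(a,b)$ remains redundant in $D(v)N$, so that $D(a,b)D(v)N$ is defined: a redundancy witness for $(a,b)$ in $N$, with $v$ bypassed if it occurs, still has length $\ge 2$ precisely because $(a,v),(v,b)$ are not both arcs.

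Parts (iv) and (v) are the substantive cases, and I would treat (iv) in two stages (with (v) identical after interchanging parents/children and $a/b$). First, to see that the right-hand side is defined, note that since $(a,b)$ is redundant there is a path $a=u_0,u_1,\cdots,u_m=b$ in $N$ with $m\ge 2$; after passing through $a$ each parent $q_i$ acquires the arc $(q_i,u_1)$, so $q_i\to u_1\to\cdots\to b$ is a path of length $\ge 2$ in $D(a)N$, showing each $(q_i,b)$ is redundant there. Because this witness uses no arc of the form $(q_j,b)$, it persists through the successive deletions, so $\prod_i D(q_i,b)$ is legitimate and order-independent. Second, I would match arc sets directly: passing through $a$ replaces the children of $a$ by new arcs out of each $q_i$, and in particular turns the arc $(a,b)$ into the family $\{(q_i,b)\}_i$; on the left $(a,b)$ has already been removed before passing through, so these arcs are never created, while on the right they are created by $D(a)$ and then stripped off by the product, leaving the same network.

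The step I expect to be the main obstacle is precisely the arc bookkeeping in (iv) and (v) when some parent $q_i$ of $a$ happens to be a parent of $b$ as well (dually, when some child $c_j$ of $b$ is also a child of $a$). In that case the arc $(q_i,b)$ already belongs to $N$ independently of $a$: it survives both the removal of $(a,b)$ and the passing through $a$ on the left, and it is also present on the right, so one must be careful that the deletions $\prod_i D(q_i,b)$ remove only the copies genuinely produced by passing through $a$ and not any inherited from $N$. Isolating this overlap, separating the newly created arcs $(q_i,b)$ from pre-existing ones, is where the real care lies; the remaining requirements (acyclicity, the DC property, and preservation of clusters) are immediate from Theorem 5.1.
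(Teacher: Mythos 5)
Your treatment of (i)--(iii) is correct. For (i) you take a genuinely different route: the paper splits into two cases (according to whether $v$ is a parent of $w$) and tracks the arcs created in each order, whereas your single symmetric criterion --- $(x,y)$ is an arc of $D(v)D(w)N$ iff $N$ has a directed path from $x$ to $y$ with all interior vertices in $\{v,w\}$ --- yields commutativity immediately; it also quietly repairs a small slip in the paper's Case 2, which omits the new arcs $(q_i,d_j)$ from the bookkeeping on both sides (harmlessly, since the omission is symmetric). Your (ii)--(iii), including the verification that $(a,b)$ stays redundant in $D(v)N$ so that $D(a,b)D(v)N$ is defined, match the paper's argument.

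In (iv)/(v), however, your proposal stops exactly at the hard point and never closes it, and this is a genuine gap --- in fact an unclosable one, because in the overlap case you flag the asserted identity is false. Take $X=\{1,2,3,4\}$ and $N$ with vertices $\{r,a,d\}\cup X$ and arcs $(r,a),(r,1),(r,4),(a,d),(a,1),(a,3),(d,1),(d,2)$; this is DC ($cl(a)=\{1,2,3\}$, $cl(d)=\{1,2\}$), and $(a,1)$ is redundant via $a\to d\to 1$. Here $a$'s only parent $q_1=r$ is already a parent of $b=1$. Computing both sides: $D(a)D(a,1)N$ has arcs $(r,1),(r,4),(r,d),(r,3),(d,1),(d,2)$ (it in fact equals $D(a)N$), while $D(r,1)D(a)N$ is obtained from this by deleting $(r,1)$, which is a legal operation since $(r,1)$ is redundant there via $r\to d\to 1$. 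The two sides are distinct DC $X$-networks. The root cause is the one you identified: $E$ is a set, so ``copies'' do not exist, and $D(q_i,b)$ deletes the arc $(q_i,b)$ outright whether it was created by passing through $a$ or inherited from $N$. Note that the paper's own proof commits precisely this oversight: it declares all arcs $(q_i,b)$ to be ``new arcs'' of $D(a)N$ and concludes that deleting them leaves the same arcs as $N$ except those involving $a$ plus the $(q_i,c_j)$, which fails when some $(q_i,b)\in E(N)$. So while your write-up is not a complete proof, the obstacle you isolated is a genuine error in the statement itself: (iv) and (v) hold only under an extra hypothesis (in (iv), no parent of $a$ is a parent of $b$ in $N$; dually in (v), no child of $b$ is a child of $a$), or with the right-hand product reinterpreted as deleting only those arcs not already present in $N$.
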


\begin{proof}

(i) Case 1.  Suppose neither $v$ nor $w$ is a parent of the other.  
Let $q_1, \cdots , q_k$ be the parents of $v$ and $c_1, \cdots , c_n$  the children of $v$.
Let $r_1, \cdots , r_l$ be the parents of $w$ and $d_1, \cdots , d_m$  the children of $w$.
By the assumptions, $w$ is not any $q_i$ nor $c_i$; and $v$ is not any $r_i$ or $d_i$.

Then $D(v) N$ has new arcs $ (q_i,c_j) $.  Neither of the vertices of such an arc is $w$.  Now delete $w$.  In $D(w) D(v) N$ we get new arcs $ (r_i, d_j) $ since $v$ is not any $r_i$ nor $d_i$.  

A similar argument shows that we obtain the same arcs in $D(v) D(w) N$.  

Case 2.  Suppose one vertex is a parent of the other.  Without loss of generality, assume $v$ is a parent of $w$.  

Let $q_1, \cdots , q_k$ be the parents of $v$ and $c_1, \cdots , c_n, w$  the children of $v$.
Let $v, r_1, \cdots , r_l$ be the parents of $w$ and $d_1, \cdots , d_m$ the children of $w$.

First we remove $v$.  The new arcs  of $D(v) N$ are $ (q_i,c_j) $ and  $ (q_i,w) $.
Next we remove $w$.  The arcs $ (q_i,w)$, $(r_i,w)$, and $(w, d_i) $ are removed while the arcs $ (q_i,c_j) $ remain.  The new arcs in $D(w) D(v) N$ are thus $(r_i,d_j)$ and $ (q_i,c_j) $. 

Alternatively we first  remove $w$ from $N$.  The new arcs of $D(w) N$ are $ (r_i,d_j) $ and $ (v,d_j) $.  Next we remove $v$.   The arcs $ (v,d_j) $ are removed while the arcs $ (r_i,d_j) $ remain.  The new arcs of $D(v) D(w) N$ are thus $ (q_i,c_j) $ and $ (r_i,d_j) $.

The vertices and arcs of $D(v) D(w) N$ and $D(w) D(v) N$ are the same, so the conclusion follows.  

(ii) 
Let $v$ have parents $q_1, q_2, \cdots , q_k$  and children $c_1, \cdots , c_m$. 
$D(a,b) N$ has all the arcs of $N$ except $ (a,b) $. 
Then $D(v) D(a,b) N$ removes $v$ and all arcs involving $v$ and adds new arcs $ (q_i,c_j) $.  Note that no arc $ (q_i,c_j) $ is the same as $ (a,b) $ unless $a=q_i$ for some $i$ and $b = c_j$ for some $j$, (in which case $ (a,b) $ is present again in $D(v) D(a,b)N$).  But this case is excluded by hypothesis.  

Similarly, $D(v) N$ removes $v$ and all arcs involving $v$.  It adds new arcs $ (q_i, c_j) $  (if not already present).  The arc $ (a,b) $ is still present.  It remains redundant unless the only path in $N$ from $a$ to $b$ that avoids $ (a,b) $ was $a,v,b$ in which case $a$ is a parent of $v$ and $b$ is a child of $v$; this case is excluded by hypothesis.  
Next $D(a,b) D(v) N$  deletes $ (a,b) $.

Thus $D(a,b) D(v) N = D(v) D(a,b) N$
since their vertices and arcs are the same.  

(iii) 
Let $v$ have parents $a, q_1, q_2, \cdots , q_k$  and children $b, c_1, \cdots , c_m$. 
Then $D(a,b) N$ has all the arcs of $N$ except $ (a,b) $. 
Next $D(v) D(a,b) N$ removes $v$ and all arcs involving $v$ and adds new arcs $ (a,b) $, $ (a, c_j) $, $ (q_i,b) $ and $ (q_i,c_j) $.  Note that $ (a,b) $ is still present in $D(v) D(a,b)N$.  

Similarly, $D(v) N$ removes $v$ and all arcs involving $v$.  It adds new arcs $ (a,c_j) $, $ (q_i,b) $, $ (q_i, c_j) $  (if not already present).  The arc $ (a,b) $ is still present. 
We see that $ D(v) N = D(v) D(a,b) N$
since their vertices and arcs are the same.

(iv) 
Let the children of $v = a$ in $N$ be $c_1, \cdots , c_m, b$.
Then $D(a,b) N$ has the same arcs as $N$ except that $ (a,b) $ is missing.  Hence
$D(a) D(a,b) N$ has all arcs of $N$ removed that involve $a$, and it has new arcs
$ (q_i,c_j)$.

On the other hand $D(a) N$ has all arcs involving $a$ removed and new arcs 
$ (q_i,c_j) $ and $ (q_i,b)$.

Note that in $N$, $a$ has a child $d$ so $d < b$ since $ (a,b) $ was redundant in $N$.  Hence $ (q_i,b) $ is redundant since there is a path $q_i < d < b$.
Thus $D(q_i,b) $ is a well-defined operation that deletes $(q_i,b)$.

It follows that  $\big[ \prod_{i=1}^k D(q_i,b)\big] D(a) N$ has the same arcs as $N$ except that all arcs involving $a$ have been removed, and there are new arcs $ (q_i,c_j) $.
The network is thus the same as $D(a) D(a,b) N$.

(v)
Suppose that in $N$, the parents of $b$ are $a,  q_1, \cdots , q_m$.  Then $D(a,b) N$ is the same as $N$ except that $ (a,b) $ is removed.   The parents of $b$ are now only  $q_1, \cdots , q_k$.  Then $D(b) D(a,b) N$ has all arcs involving $b$ removed, plus new arcs $ (q_i,c_j) $.

On the other hand, $D(b) N$ has all the arcs of $N$ involving $b$ removed and has new arcs $ (q_i,c_j) $ and $ (a, c_j) $. 
Note that $ (a,c_j) $ is redundant in $D(b) N$ since in $N$ there is a child $d$ of $a$ and a path $a<d<b<c_j$.  If we apply $D(a,c_j) $ then all that changes is that $ (a,c_j) $ is removed.
Thus  $\big[\prod_{j=1}^k D(a,c_j)\big] D(b) N$ has all the arcs of $N$ involving $b$ removed and with new arcs $ (q_i,c_j) $.  The network thus agrees with $D(b) D(a,b) N$.
\end{proof}

\begin{cor} 
Let $N'$ be a CPS of $N$ with vertex set $V(N)-W$, where $W$ is a subset of $V(N) $.  Let $W = \{w_1, w_2, \cdots , w_k\}$ (in any order).  Then $N'$ can be written in the form
$R D(w_k) \cdots  D(w_1) N$
where $R$ is a product of $D(a_i,b_i) $ for redundant arcs $ (a_i, b_i) $ of $D(w_k) \cdots  D(w_1) N$.
\end{cor}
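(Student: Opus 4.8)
The plan is to take the given CPS expression $N' = D_m \cdots D_1 N$ and use Theorem 6.1 to transport every vertex deletion to the innermost positions (those applied earliest), so that the expression acquires the shape $R\,V\,N$ with $V$ a product of vertex deletions and $R$ a product of arc deletions. First I would record an invariant: the vertex deletions $D(v)$ occurring in any expression equal to $N'$ are exactly the deletions $D(w)$, $w \in W$, each appearing once. Indeed, arc deletions do not change the vertex set, each vertex deletion removes one vertex, and by Theorem 5.1 a removed vertex never reappears; since $V(N') = V(N) - W$, the removed vertices are precisely $W$. Moreover each rewrite rule (i)--(v) of Theorem 6.1 leaves the vertex deletions untouched (it only replaces arc deletions by arc deletions), so this invariant persists under all the manipulations below.

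The heart of the argument is a sub-lemma proved by induction on $p$: if $D(w)$ is a vertex deletion such that the $p$ operations applied before it are all arc deletions, then the expression can be rewritten so that $D(w)$ is applied first, every modified or newly created arc deletion being pushed to a position applied after $D(w)$. For the inductive step I would examine the innermost pair $D(w)\,D(a,b)$ and apply the relevant case of Theorem 6.1 according as $w \notin \{a,b\}$ or $w \in \{a,b\}$: case (ii) swaps them, case (iii) discards the arc deletion entirely, and cases (iv)/(v) replace $D(a,b)$ by a product of arc deletions $D(q_i,b)$ or $D(a,c_j)$. In every case $D(w)$ moves one step inward past $D(a,b)$ while only arc deletions appear, and --- this is the crucial point --- all arc deletions created in (iv)/(v) land strictly \emph{after} $D(w)$ in the application order. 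Hence they never obstruct $D(w)$'s continued inward passage, and the induction on the number $p$ of arc deletions still inner to $D(w)$ goes through.

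With the sub-lemma in hand I would prove the statement by induction on $k = |W|$. For $k = 0$ the expression is already a product $R$ of arc deletions. For $k \geq 1$, choose the innermost vertex deletion $D(w)$ (all operations inner to it are then arc deletions) and use the sub-lemma to bring it to the innermost position, obtaining $[\text{rest}]\,D(w)\,N$ where $[\text{rest}]$ is a CPS expression applied to $D(w)N$ containing $k-1$ vertex deletions; the inductive hypothesis puts $[\text{rest}]$ into the form $R\,V'$ with $V'$ a product of the remaining $k-1$ vertex deletions, giving $R\,V'\,D(w)\,N$. Thus all $k$ vertex deletions are innermost. Since by the invariant these are exactly the distinct deletions $D(w_1),\dots,D(w_k)$, Theorem 6.1(i) lets me bubble-sort them into the prescribed order $D(w_k)\cdots D(w_1)$. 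Finally, writing $M = D(w_k)\cdots D(w_1)N$, the block $R$ is a sequence of arc deletions valid on $M$; each arc it removes is redundant at the moment of removal, and since arc deletion only removes arcs, both that arc and a witnessing path of length $>1$ are present already in the larger network $M$, so each such arc is redundant in $M$, as required.

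The step I expect to be the main obstacle is termination: because cases (iv) and (v) manufacture new arc deletions, crude measures such as the total number of operations or the count of (arc, vertex) inversions can increase under a single rewrite. The resolution is exactly the placement observation in the sub-lemma --- newly created arc deletions always land outside the vertex deletion being moved --- which makes the inward passage of one fixed vertex deletion a strictly finite process and lets the outer induction on $k$ close cleanly.
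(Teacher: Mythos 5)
Your proposal is correct and takes essentially the same route as the paper's proof: the paper likewise rewrites each adjacent pair $D(w_j)\,D(a,b)$ as $S\,D(w_j)$ via Theorem 6.1 and pushes all vertex deletions rightward (inward) past the arc deletions. The paper leaves the bookkeeping implicit, and your added details --- the invariant on which vertex deletions occur, the termination argument noting that arc deletions created by cases (iv)/(v) land outside the moving $D(w)$, the use of 6.1(i) to reorder the $D(w_i)$, and the closing observation that arcs deleted by $R$ are redundant in $D(w_k)\cdots D(w_1)N$ itself --- are precisely what is needed to make that sketch rigorous.
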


\begin{proof}
 By hypothesis, $N'$ can be written as a composition $D_m \cdots  D_1 N$ where each $D_i$ is of form either $D(w_j) $ or $D(a,b) $.  But by Theorem 6.1, each $D(w_j) D(a,b) $ can be rewritten as $S D(w_j) $ where $S$ is a product of $D(a_i,b_i) $.  In this way all the $D(w_j) $ can be moved to the right so that there are no deletions of a redundant arc to the right of any $D(w_j) $.  
\end{proof}

\section { Invariant properties} 

This section proves the following  partial converse to Theorem 5.2 and considers its consequences.  

\begin{thm} 
Let $N$ and $N'$ be DC $X$-networks such that  $V(N') \subseteq V(N) $.  Assume\\
(i) Whenever $u < v$ in $N'$, then $u < v$ in $N$.\\
(ii) Whenever $u$ and $v$ are in $V(N') $ and $u < v$ in $N$, then $u < v$ in $N'$.\\
Suppose $N'$ contains no redundant arc.  
Then $N'$ is a CPS of $N$.
\end{thm}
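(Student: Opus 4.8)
The plan is to realize $N'$ from $N$ in two stages: first delete every vertex of $N$ that is absent from $N'$, then delete a collection of redundant arcs. Write $W = V(N)\setminus V(N')$. Since $N'$ is a DC $X$-network it contains a vertex with cluster $X$ (its root) and a vertex with cluster $\{x\}$ for each $x\in X$; because $N$ is DC these vertices are exactly $r$ and the leaves of $N$, so $W$ contains neither $r$ nor any leaf. Hence each $w\in W$ is a legal target for the passing-through operation. I would fix any ordering $w_1,\dots,w_k$ of $W$ and set $M = D(w_k)\cdots D(w_1)N$. By Theorem 5.1(2) each step keeps us in $DC(X)$ and preserves the root and the leaves, so $M$ is a CPS of $N$ with $V(M)=V(N')$.

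The key reduction is to show that $M$ and $N'$ carry the same reachability order on their common vertex set. For $u,w\in V(N')$, Theorem 5.2(5) applied to the CPS $M$ gives $u<w$ in $M$ iff $u<w$ in $N$ (under the identification of a vertex with its cluster, the map $\phi$ is the identity). Hypotheses (i) and (ii) give $u<w$ in $N'$ iff $u<w$ in $N$. Combining these, $u<w$ in $M$ iff $u<w$ in $N'$; call this common partial order $<$.

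Next I would match the arcs. If $(u,v)$ is an arc of $N'$ then $u<v$, and because $N'$ has no redundant arc there is no $t$ with $u<t<v$ in $N'$ (such a $t$ would give a directed path of length $\ge 2$ from $u$ to $v$, making $(u,v)$ redundant). Thus $(u,v)$ is a covering pair of $<$, hence also a covering pair in $M$; and in any acyclic network a covering pair is realized by an arc (a path $u=x_0,\dots,x_\ell=v$ with $\ell\ge 2$ would place $x_1$ strictly between). Therefore $E(N')\subseteq E(M)$, and it remains to remove the arcs of $E(M)\setminus E(N')$. I claim each can be deleted as a redundant arc while $<$ is unchanged. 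Processing them one at a time, at any intermediate network $M''$ with $E(N')\subseteq E(M'')\subseteq E(M)$ whose order is still $<$ (preserved at each step by Theorem 5.1(8)), pick $(a,b)\in E(M'')\setminus E(N')$. Then $a<b$ yet $(a,b)\notin E(N')$, so there is a directed path from $a$ to $b$ in $N'$ of length $\ge 2$, whose second vertex $y$ satisfies $a<y<b$ in $N'$ and hence in $M''$. Concatenating a path $a\to\cdots\to y$ with a path $y\to\cdots\to b$ in $M''$ yields a directed path of length $\ge 2$ from $a$ to $b$, so $(a,b)$ is redundant in $M''$ and $D(a,b)M''$ is a legal CPS step. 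After removing all arcs of $E(M)\setminus E(N')$ we are left with the network on vertex set $V(N')$ and arc set $E(N')$, namely $N'$, exhibiting $N'$ as a CPS of $N$.

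The step I expect to be the crux is the arc matching: the hypothesis that $N'$ has no redundant arc is exactly what forces its arcs to be precisely the covering pairs of the shared order, and this is what lets me locate $E(N')$ uniquely inside $E(M)$ and then discard the surplus arcs as redundant. The bookkeeping that reachability is preserved throughout the deletions (Theorem 5.1(8)) is routine but must be invoked to keep the covering relation fixed while arcs are being removed.
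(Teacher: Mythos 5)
Your proof is correct. The skeleton matches the paper's: both arguments delete the vertices of $V(N)\setminus V(N')$ by passing-through (your $M$ is the paper's intermediate network, modulo the paper's harmless preliminary cleanup of redundant arcs of $N$), and both exploit the fact that hypotheses (i)--(ii) together with Theorem 5.2(5) make the reachability order on $V(N')$ agree in $N'$, in $N$, and in the intermediate network. Where you genuinely diverge is the endgame. The paper keeps deleting redundant arcs until none remain, obtaining a redundant-arc-free CPS $N_3$ with vertex set $V(N')$, and then proves $N'=N_3$ by a two-sided arc-containment argument, each direction by contradiction using that the respective network has no redundant arc (essentially the uniqueness argument that reappears as Lemma 7.2). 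You instead prove only the one containment $E(N')\subseteq E(M)$ --- via the observation that the arcs of a redundant-arc-free DC network are exactly the covering pairs of its order, and that covering pairs must be arcs in any acyclic network with the same order --- and then steer the arc-deletion phase by hand: each surplus arc $(a,b)\in E(M'')\setminus E(N')$ is redundant in the current network $M''$ because $a<b$ holds in $N'$ while $(a,b)$ is not an arc there, so some vertex $y$ lies strictly between, and Theorem 5.1(8) keeps the order fixed as arcs are removed, so the process terminates exactly at $N'$. Your route makes the CPS certificate explicit (the full sequence of legal operations from $N$ to $N'$ is produced), at the cost of a bookkeeping induction; the paper's route is shorter because once both networks are known to be redundant-arc-free, their arc sets are forced to coincide by the shared order. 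Both arguments are sound.
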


Thus conditions (i) and (ii) together with the absence of redundant arcs imply that $N'$ is a CPS, obtained from $N$ by a composition of operations of form $D(v) $ and $D(a,b) $.  

Since the networks are DC, the condition $V(N') \subseteq V(N) $ can be rephrased by saying that there exists $\phi: V(N') \to V(N) $ such that, for all $v\in V(N')$, $cl(v; N') = cl(\phi(v); N) $.

\begin{proof}
 Let $W = V(N) - V(N') $.  Write $W = \{w_1, w_2, \cdots , w_k\}$ in some fixed order.  There is a composition $S = D(a_m,b_m) \cdots   D(a_1,b_1) $ of deletions of redundant arcs $ (a_i,b_i) $ of $N$ such that
$N_1 := S N$ contains no redundant arcs.  Let 
$N_2 = D(w_k) \cdots  D(w_1) N_1$.
There is then a composition $U$ of deletion of redundant arcs such that
$N_3 = U N_2$ contains no redundant arcs.
Note that $N_3$ is a CPS of $N$ by definition.

We claim that $N' = N_3$.   By the choice of $W$, $N'$ and $N_3$ have the same vertex set.  There remains to show that the arcs of $N'$ and $N_3$ are the same.

Let $ (u,v) $ be an arc of $N'$.   By (i) $u < v$ in $N$.   Since $N_3$ is a CPS of $N$, it follows $u < v$ in $N_3$ by Theorem 5.2.  If $ (u,v) $ is not an arc of $N_3$, then there exists a vertex $e$ of $N_3$ such that $u < e < v$ in $N_3$.  Note $e$ is a vertex of $N'$ as well by the choice of $W$.  Since $N_3$ is a CPS it follows $u < e < v$ in $N$, whence by (ii) $u < e < v$ in $N'$.  It follows that $ (u,v) $ is redundant in $N'$, a contradiction.  This shows that $ (u,v) $ is an arc of $N_3$.  

Now assume that $ (u,v) $ is an arc of $N_3$.  Since $N_3$ is a CPS of $N$, it follows that $u < v$ in $N$.   By (ii), it follows that $u < v$ in $N'$.  We claim $ (u,v) $ is an arc of $N'$.  If not, then there exists a vertex $e$ of $N'$ such that $u < e < v$ in $N'$.  By (i), $u < e < v$ in $N$.  Since $N_3$ is a CPS of $N$ it follows that $u < e < v$ in $N_3$.  Hence $ (u,v) $ is redundant in $N_3$, a contradiction.   This shows that $ (u,v) $ is an arc of $N'$.
\end{proof}

\begin{lem} 
Suppose $N$ and $N'$ are DC $X$-networks with the same vertex set $V$.  Assume\\
(i) Whenever $u < v$ in $N'$ then $u < v$ in $N$.\\
(ii) Whenever $u < v$ in $N$ then $u < v$ in $N'$.\\
Suppose neither $N$ nor $N'$ has a redundant arc.  Then $N = N'$.
\end{lem}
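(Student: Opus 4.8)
The plan is to show that once $N$ and $N'$ are known to induce the same partial order on their common vertex set $V$ and to have no redundant arcs, their arc sets are forced to coincide. The bridge is the observation that in a network without redundant arcs the arcs are exactly the covering pairs of the partial order $\leq$.

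First I would establish the following characterization and apply it to $N$ and $N'$ separately. \emph{Claim:} if $M$ is a DC $X$-network with no redundant arc, then $(u,v)$ is an arc of $M$ if and only if $u < v$ in $M$ and there is no vertex $e$ with $u < e < v$ in $M$. For the forward direction, suppose $(u,v)$ is an arc and some $e$ satisfies $u < e < v$; concatenating a directed path from $u$ to $e$ with one from $e$ to $v$ produces a directed path from $u$ to $v$ of length at least $2$, so $(u,v)$ is redundant, contradicting the hypothesis. For the converse, suppose $u < v$ with no such intermediate $e$, and choose a directed path $u = v_0, v_1, \dots, v_k = v$ with $k \geq 1$. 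If $k \geq 2$, then, since acyclicity forces the $v_i$ to be distinct, the vertex $v_1$ satisfies $u < v_1 < v$, a contradiction; hence $k = 1$ and $(u,v)$ is an arc.

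Next I would record that hypotheses (i) and (ii) say precisely that $u < v$ in $N$ if and only if $u < v$ in $N'$, for all $u,v \in V$; that is, $N$ and $N'$ induce the same partial order on $V$. Combining this with the Claim applied to both networks, for any $u, v \in V$ the arc $(u,v)$ belongs to $N$ iff $u < v$ and no intermediate vertex exists in $N$, which by the agreement of the two orders holds iff the same condition holds in $N'$, i.e. iff $(u,v)$ is an arc of $N'$. Thus $E(N) = E(N')$.

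Finally, since $V(N) = V(N') = V$ and $E(N) = E(N')$, and the leaf labels together with the unique root are determined by this common structure, the identity map is an isomorphism, so $N = N'$. I expect the only delicate point to be the converse half of the Claim: one must check that a directed path witnessing $u < v$ of length at least $2$ genuinely supplies a \emph{strictly} intermediate vertex, which is exactly where acyclicity (guaranteeing distinctness of the path's vertices) is used. Everything else is bookkeeping built on that characterization.
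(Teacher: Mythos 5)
Your proof is correct and follows essentially the same route as the paper's: both arguments rest on the fact that in a network with no redundant arcs, the arcs are exactly the covering pairs of the reachability order, so that agreement of the orders forces agreement of the arc sets. The only difference is organizational—you isolate this fact as an explicit characterization and apply it to each network, whereas the paper inlines the same reasoning as two symmetric contradiction arguments (an arc of $N'$ that failed to be an arc of $N$ would yield an intermediate vertex, making the arc redundant in $N'$).
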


\begin{proof}
The vertex sets are the same.  We show that the arcs are the same.

Suppose $ (a,b) $ is an arc of $N'$.  Then $a < b$ in $N'$ so $a < b$ in $N$.  We claim that $ (a,b) $ is an arc of $N$.  If not then there exists $c$ such that $a < c < b$ in $N$.  It follows that $a < c < b$ in $N'$ as well, whence $ (a,b) $ is a redundant arc of $N'$, a contradiction.  Hence $ (a,b) $ is an arc of $N$.

A symmetric argument shows that if $ (a,b) $ is an arc of $N$, then $ (a,b) $ is an arc of $N'$.
It follows that $N = N'$ since they have the same vertices and arcs.  
\end{proof}

Write $X = \{x_1, x_2, \cdots , x_n\}$.  Let $Tr(X)$ denote the set of trivial clusters, $Tr(X) = \{X, \{x_1\}, \{x_2\}, \cdots , \{x_n\}\}$.

\begin{thm} 
Let $N = (V,E,r,X) $ be a DC $X$-network.  Let $V'$ be a subset of $V$ containing $Tr(X)$.  There exists a unique DC $X$-network $N'$ such that\\
(1) $V(N')  = V'$.\\
(2) $N'$ has no redundant arcs.\\
(3) $N'$ is a CPS of $N$.
\end{thm}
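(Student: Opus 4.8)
The plan is to prove existence by an explicit construction and uniqueness by reducing to Lemma 7.2.

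For existence, set $W = V - V'$. Since $V' \supseteq Tr(X)$, no element of $W$ is the root $r$ or a leaf, so each is a legitimate target for a pass-through deletion. Order the elements as $W = \{w_1, \dots, w_k\}$ and form $N_2 = D(w_k) \cdots D(w_1) N$. I would verify by induction, using Theorem 5.1, that each operation is applied to a non-root, non-leaf vertex: at each stage the current network is a DC $X$-network in which $w_i$ still appears and still carries its original cluster (clusters are preserved), and that cluster lies outside $Tr(X)$, so $w_i$ is neither the root nor a leaf. Hence $N_2$ is a DC $X$-network with $V(N_2) = V - W = V'$. Next I would delete redundant arcs one at a time; by Theorem 5.1 each such deletion preserves the DC property, the vertex set, and every cluster, while strictly decreasing the finite number of arcs. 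The process therefore terminates in a network $N'$ with no redundant arc, still a DC $X$-network with vertex set $V'$. Since $N'$ arises from $N$ by a composition of the two admissible operations, it is a CPS of $N$, establishing (1), (2), (3). One may equally package this construction via Corollary 6.2.

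For uniqueness, suppose $N'$ and $N''$ both satisfy (1)--(3); both then have vertex set $V'$. Because each is a CPS of $N$ with $V(N') = V' \subseteq V(N)$, the identification map $\phi$ of Theorem 5.2 is, under the convention that a vertex is identified with its cluster, simply the inclusion of $V'$ into $V$. Consequently Theorem 5.2(5) specializes to the statement that for $u, w \in V'$ one has $u < w$ in $N'$ iff $u < w$ in $N$, and likewise $u < w$ in $N''$ iff $u < w$ in $N$. Therefore $u < w$ in $N'$ iff $u < w$ in $N''$, so $N'$ and $N''$ satisfy hypotheses (i) and (ii) of Lemma 7.2 in both directions. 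Since neither network has a redundant arc, Lemma 7.2 forces $N' = N''$.

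The main obstacle is the bookkeeping in the existence step: one must confirm that the vertex deletions never stumble onto the root or a leaf and that the terminal redundant-arc pruning both terminates and leaves the vertex set untouched; the cluster-preservation clauses of Theorem 5.1 together with the finiteness of $E$ handle exactly this. The conceptual heart of uniqueness is the observation, supplied by Theorem 5.2(5), that any CPS of $N$ inherits the restriction to its vertex set of the partial order of $N$; once two candidate networks share both their vertex set and their partial order and each has no redundant arc, their arcs are forced to be precisely the covering pairs of that order, so Lemma 7.2 yields equality.
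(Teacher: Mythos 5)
Your proof is correct and follows essentially the same route as the paper: existence by deleting the vertices of $V - V'$ via pass-through operations and then pruning redundant arcs (the paper also prunes redundant arcs before the vertex deletions, but this is inessential), and uniqueness by using Theorem 5.2(5) to show both candidates induce the same partial order as $N$ on $V'$ and then invoking Lemma 7.2. Your added bookkeeping—checking via cluster preservation that each deleted vertex is never the root or a leaf—is a point the paper leaves implicit, and it is handled correctly.
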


\begin{proof}
Let $U = V - V'$.  The members of $U$ are the vertices that must be deleted from $N$ to obtain the vertices of $N'$.  Write $U = \{u_1, u_2, \cdots , u_k\}$ in some fixed order.  There is a composition $S_1 = D(a_m,b_m) \cdots   D(a_1,b_1) $ of deletion of redundant arcs  such that
$N_1 := S_1 N$ contains no redundant arcs.  Let 
$N_2 := D(u_k) \cdots  D(u_1) N_1$.
There is then a composition $S_2$ of deletions of redundant arcs such that
$N_3 = S_2 N_2$ contains no redundant arcs.
Note that $N_3$ is a CPS of $N$ by definition and $V(N_3) = V'$.  This proves the existence.

For the uniqueness, let $N'$ be another such $X$-network.  If $u < v$ in $N_3$, then $u < v$ in $N$, so $u < v$ in $N'$.   Similarly if $u < v$ in $N'$, then $u < v$ in $N$, so $u < v$ in $N_3$.
Since neither $N_3$ nor $N'$ has redundant arcs, Lemma 7.2 implies $N' = N_3$.
\end{proof}

The unique $N'$ of Theorem 7.3 will be denoted $N(W) $, where $W = V(N') - Tr$.
Note that the members of $W$ are the nontrivial clusters of $N'$. Thus $N(W) $ is the unique CPS of $N$ with no redundant arcs and with vertex set $W \cup Tr$.  We may also denote it by $N(w_1, \cdots , w_k) $ where the elements of $W$ are $w_1, \cdots , w_k$.  

\begin{thm} 
Let $N = (V,E,r,X) $ be a DC $X$-network.  Let $W$ and $W'$ be subsets of $V$ disjoint from $Tr$.  Then $N(W') $ is a CPS of $N(W) $ iff $W' \subseteq W$.
\end{thm}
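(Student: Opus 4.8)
The plan is to treat the two directions separately, relying on Theorem 7.3 for existence and uniqueness and on the (evident) transitivity of the CPS relation. Throughout I keep the standing identification of a vertex of a DC $X$-network with its cluster, so that $V(N(W)) = W \cup Tr$ and $V(N(W')) = W' \cup Tr$ are literally sets of clusters.

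For the ``only if'' direction, suppose $N(W')$ is a CPS of $N(W)$. Then Theorem 5.2(2) furnishes, for each vertex $v$ of $N(W')$, a unique vertex $\phi(v)$ of $N(W)$ with $cl(\phi(v); N(W)) = cl(v; N(W'))$; since both networks are DC over the same $X$, clusters determine vertices, so this says exactly that every cluster of $N(W')$ occurs in $N(W)$, i.e. $V(N(W')) \subseteq V(N(W))$, or $W' \cup Tr \subseteq W \cup Tr$. Because $W$ and $W'$ are disjoint from $Tr$, intersecting with the complement of $Tr$ yields $W' \subseteq W$. This direction is short and uses only Theorem 5.2.

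For the ``if'' direction, assume $W' \subseteq W$. First I would apply Theorem 7.3 with $N(W)$ in the role of $N$ and with the vertex subset $V' = W' \cup Tr$. This subset contains $Tr(X)$ and, since $W' \subseteq W$, lies inside $V(N(W)) = W \cup Tr$, so the hypotheses are met; the theorem produces a unique DC $X$-network $M$ with $V(M) = W' \cup Tr$, no redundant arcs, and $M$ a CPS of $N(W)$. The crux is then to identify $M$ with $N(W')$. Here I invoke transitivity: concatenating a simplification sequence from $N$ to $N(W)$ with one from $N(W)$ to $M$ exhibits $M$ as a CPS of $N$, since each step remains a legal $D(v)$ or $D(a,b)$ operation at its own stage. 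Thus $M$ is a CPS of $N$ with vertex set $W' \cup Tr$ and no redundant arcs --- precisely the three properties that characterize $N(W')$ in Theorem 7.3 applied to the base network $N$. By the uniqueness clause there, $M = N(W')$; and as $M$ is a CPS of $N(W)$ by construction, so is $N(W')$.

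I expect the only genuine subtlety to be bookkeeping across the two applications of Theorem 7.3: one must verify that $W' \cup Tr$ is a legitimate vertex subset of $N(W)$ (not merely of $N$), and one must be careful to apply the uniqueness assertion to the correct base network --- namely $N$, not $N(W)$ --- when concluding $M = N(W')$. No new combinatorial estimates are required; the whole argument is a clean use of the universal characterization packaged in Theorem 7.3 together with the obvious composability of simplification sequences.
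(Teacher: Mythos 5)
Your proposal is correct and takes essentially the same approach as the paper: the ``only if'' direction is the paper's verbatim vertex-set containment argument, and the ``if'' direction has the same logical skeleton. The only difference is cosmetic --- where you cite the existence half of Theorem 7.3 with base network $N(W)$ to produce $M$, the paper unfolds that construction explicitly (deleting the vertices of $W - W'$ from $N(W)$ and then stripping redundant arcs), exhibits the transitivity you invoke by concatenating the explicit deletion sequences, and then concludes, exactly as you do, by the uniqueness clause of Theorem 7.3 applied to the base network $N$.
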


\begin{proof}
 If $N(W') $ is a CPS of $N(W) $, then \\
$$Tr \cup W' = V(N(W')) \subseteq V(N(W)) = W \cup Tr.$$   
Since both $W$ and $W'$ are disjoint from $Tr$, it follows $W' \subseteq W$.

Conversely, suppose $W' \subseteq W$.  Let  $v_1, \cdots , v_k$ be a listing of $V-W'$ such that 
$v_1, \cdots , v_j$ are the members of $V - W$ while $v_{j+1}, \cdots , v_k$ are the members of $W - W'$.  
There exists a composition $R$ of deletions of redundant arcs such that 
$N(W) = R D(v_j) \cdots  D(v_1) N$.
Let $N_1 = D(v_k) \cdots  D(v_{j+1}) N(W) $.   This network has vertex set $Tr \cup W'$.  There is a composition $S$ of deletions of redundant arcs such that $S N_1$ has no redundant arcs.   By Theorem 7.3, $S N_1 = N(W') $.  Hence $N(W') $ is a CPS of $N(W) $.
\end{proof}

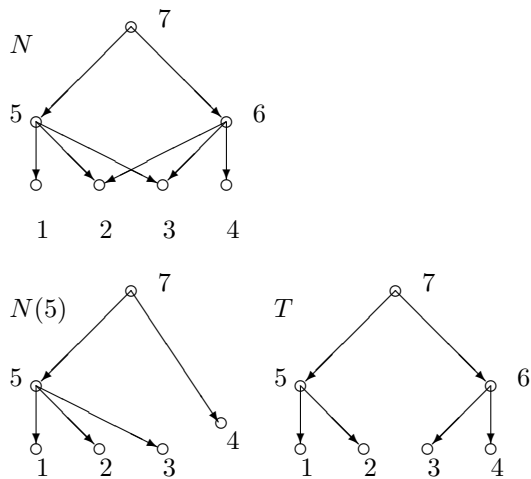
\begin{figure}[!htb]  
\begin{center}

\begin{picture}(200,190) (0,0)  

\put(10,134){\vector(0,-1){22}}
\put(10,134){\vector(1,-1){22}}
\put(10,134){\vector(2,-1){46}}
\put(10,134){\circle{4}}
\put(82,134){\vector(-2,-1){46}}
\put(82,134){\vector(0,-1){22}}
\put(82,134){\vector(-1,-1){22}}
\put(82,134){\circle{4}}
\put(46,170){\vector(-1,-1){34}}
\put(46,170){\vector(1,-1){34}}
\put(46,170){\circle{4}}
\put(10,110){\circle{4}}
\put(34,110){\circle{4}}
\put(58,110){\circle{4}}
\put(82,110){\circle{4}}

\put(10,90){1}
\put(34,90){2}
\put(58,90){3}
\put(82,90){4}
\put(92,134){6}
\put(0,134){5}
\put(56,170){7}
\put(0,160){$N$}

\put(10,34){\vector(0,-1){22}}
\put(10,34){\vector(1,-1){22}}
\put(10,34){\vector(2,-1){46}}
\put(10,34){\circle{4}}
\put(46,70){\vector(-1,-1){34}}
\put(46,70){\vector(2,-3){33}}
\put(46,70){\circle{4}}
\put(10,10){\circle{4}}
\put(34,10){\circle{4}}
\put(58,10){\circle{4}}
\put(80,20){\circle{4}}

\put(10,0){1}
\put(34,0){2}
\put(58,0){3}
\put(82,10){4}
\put(0,34){5}
\put(56,70){7}
\put(0,60){$N(5)$}

\put(110,34){\vector(0,-1){22}}
\put(110,34){\vector(1,-1){22}}
\put(110,34){\circle{4}}
\put(182,34){\vector(0,-1){22}}
\put(182,34){\vector(-1,-1){22}}
\put(182,34){\circle{4}}
\put(146,70){\vector(-1,-1){34}}
\put(146,70){\vector(1,-1){34}}
\put(146,70){\circle{4}}
\put(110,10){\circle{4}}
\put(134,10){\circle{4}}
\put(158,10){\circle{4}}
\put(182,10){\circle{4}}

\put(110,0){1}
\put(134,0){2}
\put(158,0){3}
\put(182,0){4}
\put(192,34){6}
\put(100,34){5}
\put(156,70){7}
\put(100,60){$T$}

\end{picture}

\caption{ A DC $X$-network $N$  with tree $N(5)$ that is a CPS of $N$ and also a tree $T$ that is displayed by $N$ but is not a CPS of $N$.  
 }  
\end{center}
\end{figure}

An $X$-network $N'$ is \emph{displayed} by an $X$-network $N$ if $N'$ is obtained by deleting arcs of $N$ and possibly contracting arcs $(a,b)$ if $a$ has out-degree one.   It is important to recognize that, for example, having a tree $T$ displayed by a DC network $N$ is not the same as having the tree $T$ be a CPS of $N$.  Figure 3 shows a network $N$ and a CPS $N(5)$ of $N$ which is displayed by $N$.  It also shows a tree $T$ that is displayed by $N$ but that is not a CPS.  One recognizes that $T$ is not a CPS since $cl(5;T) =  \{1,2\}$, which is not the cluster of any vertex of $N$.  

Suppose $N$ has no redundant arcs.  Suppose $N'$ satisfies the hypotheses of Theorem 7.1 except that $N'$ contains a redundant arc.  Then $N'$ need not be a CPS of $N$.  To see this, consider the network $N$ in Figure 4, which is a rooted tree.  The accompanying network $N'$ satisfies the hypotheses of Theorem 7.1 except that there is a redundant arc.  But every CPS of a tree is a tree by Theorem $5.3$, so $N'$ is not a CPS of $N$.  Hence Theorem 7.1 is not true without the assumption that there are no redundant arcs.

\begin{figure}[!htb]  
\begin{center}

\begin{picture}(200,115) (0,0)
\put(40,40){\vector(-1,-1){28}}
\put(40,40){\vector(1,-1){28}}
\put(40,40){\circle{4}}
\put(40,70){\vector(0,-1){28}}
\put(40,70){\vector(1,-1){28}}
\put(40,70){\circle{4}}
\put(40,100){\vector(0,-1){28}}
\put(40,100){\vector(1,-1){28}}
\put(40,100){\circle{4}}
\put(10,10){\circle{4}}
\put(70,10){\circle{4}}
\put(70,40){\circle{4}}
\put(70,70){\circle{4}}

\put(0,10) {1}
\put(55,10) {2}
\put(75,40){3}
\put(75,70){4}
\put(25,40){5}
\put(25,70){6}
\put(25,100){7}
\put(0,100){$N$}

\put(140,40){\vector(-1,-1){28}}
\put(140,40){\vector(1,-1){28}}
\put(140,40){\circle{4}}
\put(140,100){\vector(0,-1){58}}
\put(140,100){\vector(1,-2){29}}
\put(140,100){\vector(1,-1){28}}
\put(140,100){\vector(-1,-3){29}}
\put(140,100){\circle{4}}
\put(110,10){\circle{4}}
\put(170,10){\circle{4}}
\put(170,40){\circle{4}}
\put(170,70){\circle{4}}

\put(100,10) {1}
\put(155,10) {2}
\put(175,40){3}
\put(175,70){4}
\put(125,40){5}
\put(25,70){6}
\put(125,100){7}
\put(100,100){$N'$}

\end{picture}

\caption{ $N'$ is not a CPS of $N$   
 }  
\end{center}
\end{figure}
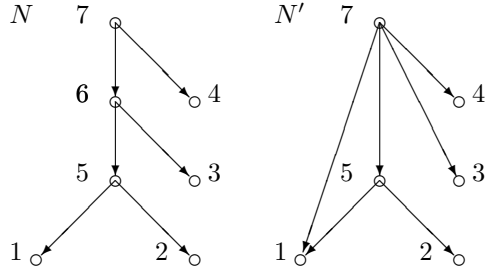

The next result shows that the example is typical.  

\begin{thm} 
Let $N$ and $N'$ be DC $X$-networks such that $V(N') \subseteq V(N) $.  Assume\\
(i) Whenever $u < v$ in $N'$, then $u < v$ in $N$.\\
(ii) Whenever $u$ and $v$ are in $V(N')$ and $u<v$ in $N$, then $u < v$ in $N'$.\\
Then there exists a CPS $N'' = (V'', E'', r, X)$ of $N$ with no redundant arcs and a collection of pairs $(a_i,b_i)$ for $i = 1, \cdots , k$ where $a_i$ and $b_i$  lie in $V''$, $a_i < b_i$ in $N''$,  $(a_i,b_i)\notin E''$, such that
$N'$  is obtained by adjoining arcs $(a_i,b_i)$ to $E''$.  
\end{thm}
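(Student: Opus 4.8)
The plan is to build $N''$ directly from Theorem 7.3 and then check that $N'$ differs from $N''$ only by redundant arcs. First I would observe that the hypothesis applies: since $N'$ is a DC $X$-network, its root has cluster $X$ and each leaf has a singleton cluster, so $Tr(X) \subseteq V(N')$, and together with $V(N') \subseteq V(N)$ the set $V' := V(N')$ is a subset of $V(N)$ containing $Tr(X)$. Applying Theorem 7.3 to this $V'$ produces a unique DC $X$-network $N'' = (V'', E'', r, X)$ with $V'' = V(N')$, with no redundant arcs, and which is a CPS of $N$. This $N''$ is the network named in the conclusion, and it remains only to compare its arcs with those of $N'$.

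The key step is to show that $N'$ and $N''$ induce the same reachability relation on their common vertex set $V'' = V(N') = V(N'')$. On one side, hypotheses (i) and (ii) together say exactly that for $u, v \in V(N')$ we have $u < v$ in $N'$ iff $u < v$ in $N$. On the other side, because $N''$ is a CPS of $N$, Theorem 5.2(5), read under the identification of each vertex with its cluster so that $\phi$ is the identity on $V''$, gives $u < v$ in $N''$ iff $u < v$ in $N$ for $u, v \in V(N'')$. Chaining these equivalences yields $u < v$ in $N'$ iff $u < v$ in $N''$.

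With the reachability relations matched, I would compare the two arc sets. First, every arc $(u,v) \in E''$ lies in $E(N')$: from $u < v$ in $N''$ we get $u < v$ in $N'$, and were $(u,v)$ not an arc of $N'$ there would exist $e \in V''$ with $u < e < v$ in $N'$, hence $u < e < v$ in $N''$, making $(u,v)$ redundant in $N''$ and contradicting that $N''$ has no redundant arcs. Second, any arc $(a,b) \in E(N')$ that is not in $E''$ satisfies $a < b$ in $N'$, hence $a < b$ in $N''$, while $(a,b) \notin E''$; listing these arcs as $(a_1,b_1), \dots, (a_k,b_k)$ gives precisely the pairs required in the statement, with $a_i, b_i \in V''$ and $a_i < b_i$ in $N''$. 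Thus $E(N') = E'' \cup \{(a_1,b_1), \dots, (a_k,b_k)\}$, exhibiting $N'$ as $N''$ with the arcs $(a_i,b_i)$ adjoined.

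I expect the argument to be essentially routine once the reachability relations are identified, so I do not anticipate a serious obstacle. The one point needing care is the bookkeeping of the cluster-identification that lets $V(N')$, $V(N'')$, and the corresponding vertices of $N$ be treated as a single set; in particular I would be explicit that Theorem 5.2(5) is applied to $N''$ with $\phi$ the identity under this identification, so that reachability in $N''$ genuinely coincides with reachability in $N$ on the shared vertices, which is what links the two reachability equivalences together.
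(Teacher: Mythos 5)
Your proof is correct, but it takes a genuinely different route from the paper's. The paper works from $N'$ downward: it lists the redundant arcs $(a_1,b_1),\dots,(a_k,b_k)$ of $N'$, sets $N''=D(a_k,b_k)\cdots D(a_1,b_1)N'$, observes that $N''$ has no redundant arcs and (by Theorem 5.2 applied to $N''$ as a CPS of $N'$, chained with hypotheses (i)--(ii)) satisfies the hypotheses of Theorem 7.1 relative to $N$; Theorem 7.1 then certifies that $N''$ is a CPS of $N$, and $N'$ is recovered by adjoining the deleted arcs. So in the paper the decomposition $E(N')=E''\cup\{(a_i,b_i)\}$ holds by construction, and all the work lies in showing $N''$ is a CPS of $N$. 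You instead work from $N$ upward: Theorem 7.3 applied to $V'=V(N')$ (which you correctly check contains $Tr(X)$, a hypothesis the paper never needs to make explicit because Theorem 7.1 has no such condition) hands you the canonical redundancy-free CPS $N''$ of $N$ on that vertex set, so ``CPS with no redundant arcs'' comes for free, and all your work goes into the arc comparison: $E''\subseteq E(N')$ via the matched reachability relations plus the absence of redundant arcs in $N''$, with the leftover arcs of $N'$ serving as the $(a_i,b_i)$. That comparison step is essentially a one-sided version of the paper's Lemma 7.2 (one-sided because only $N''$, not $N'$, is redundancy-free). What each buys: the paper's proof is shorter because Theorem 7.1 absorbs the bookkeeping; yours bypasses Theorem 7.1 entirely, and it identifies $N''$ explicitly as the unique network $N(W)$ of Theorem 7.3 with $W=V(N')-Tr(X)$, which makes clear that the same $N''$ serves simultaneously for every $N'$ satisfying the hypotheses on a given vertex set.
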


\begin{proof}
  
Let $ (a_1, b_1), \cdots,  (a_k,b_k) $ denote the redundant arcs of $N'$.\\
Let
$N'' = D(a_k,b_k) \cdots  D(a_1,b_1) N'$.  Then $N''$ has no redundant arcs.   By Theorem 5.2, $N''$  satisfies \\
(i) Whenever $u < v$ in $N''$, then $u < v$ in $N$.\\
(ii) Whenever $u$ and $v$ are in $V(N'') $ and $u < v$ in $N$, then $u < v$ in $N''$.

Hence by Theorem 7.1, $N''$ is a CPS of $N$.  It follows that $N'$ is obtained by adjoining the arcs $ (a_i,b_i) $ back to the CPS $N''$.
\end{proof}

\section { An example} 

In this section we compute the inheritance metric in an example and also find some cluster-preserving simplifications of an initial network $N\in DC(X)$.  For this example, we are also able to compute  CPS trees that ``best fit''  the initial network $N$.  The idea is that an initial  $N \in DC(X) $  could be very complicated and the following problem then arises:

\textbf{Problem}.  Given $N \in DC(X) $, let $\mathcal{T}(N)$ be the collection of CPS of $N$ which are trees.  Find $T_0 \in \mathcal{T}(N)$ such that $T_0$ minimizes $D(N,T)$ for $T\in\mathcal{T}(N)$.

We call a solution to the problem a \emph{best fitting CPS tree for N}.  

More generally, given a subset $\mathcal{S} \subset DC(X) $, we might seek a member $S_0 \in \mathcal{S}$ such that $S_0$ minimizes $D(N,S)$ for $S \in\mathcal{S}$.

Here we begin the example. Let $N$ be the network in Figure 1.  Note that $X = \{1,2,3,4\}$ and  $N$ is DC.  List the 9 vertices in their order 1, 2, $\cdots$,  9.  Then the adjacency matrix $A$ is the matrix

$$A=\left[\begin{array}{ccccccccc}
0	&0	&0	&0	&0	&0	&0	&0	&0\\
0	&0	&0	&0	&0	&0	&0	&0	&0\\
0	&0	&0	&0	&0	&0	&0	&0	&0\\
0	&0	&0	&0	&0	&0	&0	&0	&0\\
1	&1	&0	&0	&0	&0	&0	&0	&0\\
0	&1	&1	&0	&0	&0	&0	&0	&0\\
0	&0	&1	&1	&0	&0	&0	&0	&0\\
0	&0	&0	&0	&1	&1	&0	&0	&0\\
0	&0	&1	&0	&0	&0	&1	&1	&0
\end{array} \right]$$

The longest path has length 3, so $H = I + A + A^2 + A^3$.  Hence the inheritance matrix $H = H(N) $ is 

$$H = \left[\begin{array}{ccccccccc}
1	&0	&0	&0	&0	&0	&0	&0	&0\\
0	&1	&0	&0	&0	&0	&0	&0	&0\\
0	&0	&1	&0	&0	&0	&0	&0	&0\\
0	&0	&0	&1	&0	&0	&0	&0	&0\\
1	&1	&0	&0	&1	&0	&0	&0	&0\\
0	&1	&1	&0	&0	&1	&0	&0	&0\\
0	&0	&1	&1	&0	&0	&1	&0	&0\\
1	&2	&1	&0	&1	&1	&0	&1	&0\\
1	&2	&3	&1	&1	&1	&1	&1	&1
\end{array} \right]$$

As a check, note that $H_{9,3} = 3$ since there are 3 directed paths from 9 to 3.  

Suppose one sought examples of CPS of $N$ that are trees.  Consider, for example, $T_1 = 
D(9,3) D(7) D(8,2) D(6) D(9,3) N$, shown in Figure 5. 
Vertices have the same labels in $T_1$ as in $N$. This is possible since the vertices are identified with their clusters.  Since $T_1$ has no redundant arcs and nontrivial vertices 5 and 8, $T_1 = N(5,8) $ by Theorem 7.3.  

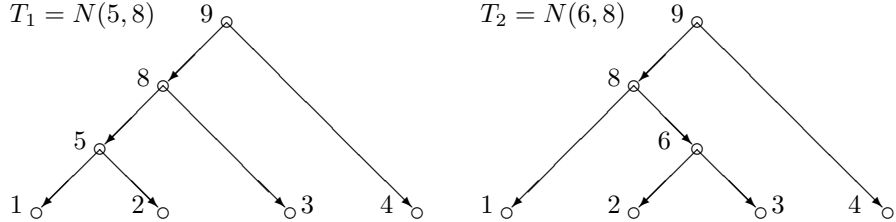
\begin{figure}[!htb]  
\begin{center}

\begin{picture}(350,100) (0,0)
\put(34,34){\vector(-1,-1){22}}
\put(34,34){\vector(1,-1){22}}
\put(34,34){\circle{4}}
\put(58,58){\vector(-1,-1){22}}
\put(58,58){\vector(1,-1){46}}
\put(58,58){\circle{4}}
\put(82,82){\vector(-1,-1){22}}
\put(82,82){\vector(1,-1){70}}
\put(82,82){\circle{4}}
\put(10,10){\circle{4}}
\put(58,10){\circle{4}}
\put(106,10){\circle{4}}
\put(154,10){\circle{4}}

\put(0,10) {1}
\put(46,10) {2}
\put(110,10){3}
\put(140,10){4}
\put(24,34){5}
\put(48,58){8}
\put(72,82){9}
\put(0,82){$T_1=N(5,8)$}

\put(236,58){\vector(-1,-1){46}}
\put(236,58){\vector(1,-1){22}}
\put(236,58){\circle{4}}

\put(260,34){\vector(-1,-1){22}}
\put(260,34){\vector(1,-1){22}}
\put(260,34){\circle{4}}
\put(260,82){\vector(-1,-1){22}}
\put(260,82){\vector(1,-1){70}}
\put(260,82){\circle{4}}
\put(188,10){\circle{4}}
\put(236,10){\circle{4}}
\put(284,10){\circle{4}}
\put(332,10){\circle{4}}

\put(178,10) {1}
\put(226,10) {2}
\put(288,10){3}
\put(317,10){4}
\put(245,34){6}
\put(226,58){8}
\put(250,82){9}
\put(178,82){$T_2=N(6,8)$}

\end{picture}

\caption{ The best fitting CPS trees $T_1=N(5,8)$ and $T_2=N(6,8)$ for $N$ in Figure 1.   
 }  
\end{center}
\end{figure}

To compute the distance $D(N,T_1) $, let $C$ consist of all clusters of $N$, identified with the vertices of $N$.  Note that $T_1$ does not contain 6 or 7.  Then 

$$_CH(T_1)=\left[\begin{array}{ccccccccc}
1	&0	&0	&0	&0	&0	&0	&0	&0\\
0	&1	&0	&0	&0	&0	&0	&0	&0\\
0	&0	&1	&0	&0	&0	&0	&0	&0\\
0	&0	&0	&1	&0	&0	&0	&0	&0\\
1	&1	&0	&0	&1	&0	&0	&0	&0\\
0	&0	&0	&0	&0	&0	&0	&0	&0\\
0	&0	&0	&0	&0	&0	&0	&0	&0\\
1	&1	&1	&0	&1	&0	&0	&1	&0\\
1	&1	&1	&1	&1	&0	&0	&1	&1
\end{array} \right]$$

From $_CH(N) = H(N) $ and $_CH(T_1) $, it is easy to compute $D(N,T_1) = 13$.

Recall that $Tr(X)$ denotes the set of trivial clusters.  In our example, when we identify the clusters with their corresponding vertices, $Tr(X) = \{1,2,3,4,9\}$.  

Suppose we seek the best fitting CPS trees of $N$ by exhaustive search.  Since there are $n = 4$ leaves, there are 15 binary rooted  trees and 11 non-binary rooted trees, hence a total of 26 possible rooted trees.  Any rooted tree contains no redundant arc.  By Theorem 7.3 if it is a CPS, it must have the form $N(S) $ for a subset $S$ of $\{5,6,7,8\}$; hence there are at most $2^4 = 16$ networks to check.  Moreover, since a rooted tree with 4 leaves can have at most 7 vertices, in our search $S$ may contain at most 2 vertices not in $Tr(X)$; this eliminates 5 of the 16 networks.  Table 1 shows the 11 possibilities and their distances from $N$.

\begin{table}[!htbp]   
\begin{center}
\begin{tabular}{l|ll}
CPS	$N'$	&$D(N,N') $	\\
\hline
$N(\emptyset) $	&23	\\
$N(5) $		&19	\\
$N(6) $		&19	\\
$N(7) $		&19	\\
$N(8) $		&18	\\
$N(5,6) $	  	&14  		&not a tree	\\
$N(5,7) $		&15	\\
$N(5,8) $		&13	 \\
$N(6,7) $	 	 &14 &not a tree \\ 
$N(6,8) $	 	&13	  \\
$N(7,8) $		 &13  	&not a tree \\ 
\end{tabular}
\end{center}
\caption{The 11 CPS of $N$ with two or fewer nontrivial vertices.}
\end{table}

Exhaustive search in this case shows that there is a tie for the best fitting CPS tree of $N$ between $T_1=N(5,8) $ and $T_2=N(6,8) $, both shown in Figure 5.  Both have distance 13 
from $N$.   Note that of the 26 rooted trees, only 8 are CPS of N.  For example, the tree $T_3$
 with nontrivial clusters $\{3,4\}$ and $\{2,3,4\}$ 
 is not a CPS of $N$; this is obvious since $\{2,3,4\}$ is a cluster of $T_3$ but not of $N$.  Since $T_3$ is DC we may nevertheless compute $D(N,T_3) = 25$.  By Theorem 7.5, $N(6) $ is a CPS of $N(6,7) $ but not of $N(5,7,8) $.   

\section{Extensions} 

\textbf{(I) Hybrid vertices with out-degree 1}

Some papers (such as \cite{nak05}, \cite{mor04}, \cite{crv09}) require that in a network  every hybrid vertex $v$ has out-degree 1.  If $c$ is the unique child of $v$, then $cl(v)=cl(c)$, and the network cannot be distinct-cluster.  The results in this paper may nevertheless apply to such networks as follows.

A \emph{hybrid out-degree-1} $X$-network $N$ is $N=(V,E,r,X)$ where $(V,E)$ is a finite acyclic directed network with root $r$ and leaf set $X$ which satisfies that every hybrid vertex $v$ has out-degree 1.   Let $H_{o1}(X)$ denote the collection of hybrid out-degree-1 $X$-networks.  Given $N\in H_{o1}(X)$, form a new network as the result of contracting each edge between a hybrid vertex and its unique child.  More specifically, given a hybrid vertex $v$ with parents $q_1, q_2, \cdots, q_k$ and a unique child $c$ let $V'=V-\{v\}$ and $E'=[E-\{(q_1,v),\cdots, (q_k,v), (v,c)\}] \cup \{(q_1,c),\cdots, (q_k,c)\}$.   Thus the hybrid vertex has been replaced by its former child, which is now hybrid with in-degree $k$.  No member of $X$ is deleted, so that  $(V',E')$ is easily seen to be a finite acyclic directed network with root $r$ and leaf set $X$.  It is possible that a hybrid vertex is now a leaf.  If this procedure is used recursively until there are no more hybrid vertices with out-degree 1, we obtain a network we shall call the \emph{derivative of N with non-unit hybrid out-degree } and denote $H_{o\neq1}(N)$.   The order of the various deletions do not affect the resulting network.  

Conversely, suppose  $N=(V,E,r,X)$ satisfies that $(V,E)$ is a finite acyclic directed network with root $r$ and leaf set $X$ but no hybrid vertex has out-degree 1.  We may construct a new network
in which every hybrid vertex has out-degree 1 by reversing the previous procedure.  More specifically, if $v$ is hybrid with parents $q_1, \cdots,q_k$ and out-degree either 0 or greater than 1, we insert a new vertex $w$, remove the arcs $(q_1,v), \cdots, (q_k,v)$  and add new arcs $(w,v)$ and $(q_1,w),\cdots,(q_k,w)$.  Note that now $w$ is hybrid with out-degree 1 because it has a unique child $v$.  If $v\in X$ then $v$ remains in $X$ after the construction.  We perform this procedure until all hybrid vertices have out-degree 1.  Call the result the \emph{derivative of N with unit hybrid out-degree} and denote it $H_{o1}(N)$.  

It is straightforward to see that if $N \in H_{o1}(X)$, then $H_{o1}(H_{o\neq1}(N))$ is isomorphic with $N$.  The argument uses that there is a one-to-one correspondence between hybrid vertices of $N$ and hybrid vertices of $H_{o\neq1}(N)$, while every hybrid vertex of $N$ has out-degree 1. 

A network $N \in H_{o1}(X)$ is \emph{extended distinct-cluster} if $H_{o\neq1}(N)$ is distinct-cluster.  Roughly, $N$ is extended distinct-cluster when distinct vertices have different clusters, except that a hybrid and its unique child are allowed to have the same cluster.  Let $DC_{o1}(X)$ denote the set of extended distinct-cluster networks.  For $N_1$ and $N_2$ in $DC_{o1}(X)$ define
$$D_{o1}(N_1,N_2) = D(H_{o\neq1}(N_1), H_{o\neq1}(N_2)).$$
The definition makes sense since $H_{o\neq1}(N_1)$ and  $H_{o\neq1}(N_2)$ are in $DC(X)$.  

\begin{thm}
$D_{o1}$ is a metric on $DC_{o1}(X)$.
\end{thm}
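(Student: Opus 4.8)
The plan is to show that $D_{o1}$ inherits each metric property directly from $D$ via the map $N \mapsto H_{o\neq1}(N)$. The essential observation is that this derivative map is a well-defined function $DC_{o1}(X) \to DC(X)$ (its output is distinct-cluster exactly by the definition of ``extended distinct-cluster''), and that $D_{o1}$ is defined to be $D$ composed with this map in both arguments. So three of the four metric axioms transfer immediately: nonnegativity, symmetry, and the triangle inequality are all consequences of the corresponding properties of $D$ established in Theorem 4.2, because each is a pointwise statement about the values $D(\,\cdot\,,\cdot\,)$ applied to the images $H_{o\neq1}(N_i)$.

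First I would record the symmetry and nonnegativity: for $N_1,N_2 \in DC_{o1}(X)$ we have $D_{o1}(N_1,N_2) = D(H_{o\neq1}(N_1),H_{o\neq1}(N_2)) \geq 0$ and equals $D_{o1}(N_2,N_1)$ by Theorem 4.2(i),(ii). Next, for the triangle inequality, given a third network $N_3 \in DC_{o1}(X)$, apply Theorem 4.2(iii) to the three images $M_i := H_{o\neq1}(N_i)$ in $DC(X)$:
$$D_{o1}(N_1,N_2) = D(M_1,M_2) \leq D(M_1,M_3) + D(M_3,M_2) = D_{o1}(N_1,N_3) + D_{o1}(N_3,N_2).$$

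The real content—and the step I expect to be the main obstacle—is the positivity axiom: $D_{o1}(N_1,N_2) = 0$ iff $N_1 = N_2$ (up to isomorphism in $DC_{o1}(X)$). One direction is trivial. For the converse, suppose $D_{o1}(N_1,N_2)=0$; then $D(M_1,M_2)=0$, so by Theorem 4.2(iv) we get $M_1 = M_2$ in $DC(X)$, i.e. $H_{o\neq1}(N_1)$ and $H_{o\neq1}(N_2)$ are isomorphic. The difficulty is that $D$ is a metric on $DC(X)$, not on $DC_{o1}(X)$, so I must argue that the derivative map is \emph{injective} on $DC_{o1}(X)$ — otherwise two genuinely different extended-distinct-cluster networks could collapse to the same derivative and $D_{o1}$ would only be a pseudometric. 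The key tool is the inverse construction $H_{o1}$ already described in the excerpt: the text states that for $N \in H_{o1}(X)$, the composite $H_{o1}(H_{o\neq1}(N))$ is isomorphic to $N$. Invoking this, I would conclude that if $H_{o\neq1}(N_1) \cong H_{o\neq1}(N_2)$, then applying $H_{o1}$ to both sides yields $N_1 \cong H_{o1}(H_{o\neq1}(N_1)) \cong H_{o1}(H_{o\neq1}(N_2)) \cong N_2$, establishing injectivity and hence $N_1 = N_2$ in $DC_{o1}(X)$.

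The only point requiring care is that $H_{o1}$ respects isomorphism—i.e. that isomorphic inputs produce isomorphic outputs—so that the chain of isomorphisms above is legitimate. This follows because $H_{o1}$ is defined purely in terms of the combinatorial structure (hybrid vertices and their parent sets), which is preserved under the leaf-label-preserving bijections defining isomorphism of $X$-networks. I would note this explicitly but not belabor it, since it is the same kind of routine verification used implicitly whenever one treats $DC(X)$ and $DC_{o1}(X)$ as sets of isomorphism classes.
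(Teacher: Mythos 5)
Your proposal is correct and follows essentially the same route as the paper: the three easy axioms transfer from Theorem 4.2, and the identity-of-indiscernibles axiom is obtained by using $H_{o1}(H_{o\neq1}(N)) \cong N$ to recover $N_1 \cong N_2$ from $H_{o\neq1}(N_1) = H_{o\neq1}(N_2)$, which is exactly the paper's argument. Your explicit remarks on the injectivity of the derivative map and on $H_{o1}$ respecting isomorphism merely spell out details the paper leaves implicit.
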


\begin{proof}
Suppose $D_{o1}(N_1,N_2)=0.$  By definition $D(H_{o\neq1}(N_1), H_{o\neq1}(N_2))=0$, so that $H_{o\neq1}(N_1) = H_{o\neq1}(N_2)$ because $D$ is a metric on $DC(X).$   But then
$N_1 = H_{o1}(H_{o\neq1}(N_1)) = H_{o1}(H_{o\neq1}(N_2)) = N_2$.  The other properties of a metric are immediate.  
\end{proof}

\textbf{(II) Use of $p$-norms}

The construction of the metric $D$ in section 4 can be easily generalized.  For $p \geq 1$ let $|| . ||_p$ denote the $p$-norm for an $m\times m$ matrix $M$ regarded as being in $\mathbb{R}^{m^2}$. Thus 
$$||M||_p = \Big[\sum [|M_{u,v}|^p: 1 \leq u, v \leq m] \Big]^{1/p}.$$  

Let $N_1$ and $N_2$ be in $DC(X)$.  Let $C$ be a set of $X$-clusters containing all clusters of $N$ and $N'$.  Our definition of the inheritance distance between $N$ and $N'$ is equivalent to
$_CD(N,N') := ||_CH - \,_CH' ||_1$ and $D(N,N') = \,_CD(N,N')$ for any such $C$.
By analogy we may define 
$_CD_p(N,N') := ||\,_CH - \,_CH' ||_p$ and then define the \emph{$p$-norm inheritance distance} between $N$ and $N'$ by
$D_p(N,N') = \,_CD_p(N,N')$ for any such $C$.  By an argument like that of Lemma 4.1, the result is the same  for any $C$ containing the clusters of both $N$ and $N'$. Note that $D_p(N,N') $ will be the $p$-th root of an integer.    In the example of Section 8, $D_2(N,N(5,8))= \sqrt{15}$.  

\textbf{Acknowledgments.}  We wish to thank the anonymous referees whose recommendations provided substantial improvements to this paper.

\end{document}